\documentclass[12pt, reqno]{amsart}

\usepackage{graphicx,xcolor,cite,mathtools}
\usepackage{amssymb,amsmath,amsthm,mathrsfs,paralist,bm,esint,setspace}
\RequirePackage[colorlinks,linkcolor=black,citecolor=blue,urlcolor=blue]{hyperref}

\numberwithin{equation}{section}

\newtheorem{theorem}{Theorem}[section]
\newtheorem{lemma}[theorem]{Lemma}
\newtheorem{proposition}[theorem]{Proposition}

\theoremstyle{definition}

\newtheorem{remark}[theorem]{Remark}

\newcommand{\R}{\mathbb{R}}
\newcommand{\E}{\mathbb{E}}
\newcommand{\Z}{\mathbb{Z}}
\newcommand{\N}{\mathbb{N}}
\newcommand{\h}{\mathfrak{h}}
\newcommand{\Cov}{\operatorname{Cov}}
\newcommand{\Var}{\operatorname{Var}}
\newcommand{\Prob}{\mathbb{P}}
\newcommand{\dd}{\,\mathrm{d}}
\newcommand{\e}{\,\mathrm{e}}
\newcommand{\Lcal}{\mathcal{L}}
\newcommand{\Acal}{\mathcal{A}}

\newcommand{\one}{\mathbf{1}}

\DeclarePairedDelimiter{\floor}{\lfloor}{\rfloor}

\title[Two-time spatial decorrelation]{Two-time spatial decorrelation for the flat KPZ fixed point}
\author{Le Chen and Fei Pu}
\date{}
 \address[Le Chen]{Department of Mathematics and Statistics, Auburn University, Auburn, AL 36849, USA.\\
\textit{Email: lzc0090@auburn.edu}
}

\address[Fei Pu]{Laboratory of Mathematics and Complex Systems,
School of Mathematical Sciences, Beijing Normal University, 100875, Beijing, China\\
\textit{Email: fei.pu@bnu.edu.cn}
}

\begin{document}

\begin{abstract}
  We establish quantitative two-time spatial decorrelation for the
  Kardar--Parisi--Zhang fixed point with flat initial data. For every $s,t>0$,
  there exist constants $C,c>0$ such that
  \[
    \big|\Cov(\h(t,x),\h(s,0))\big|
    \le C\exp\{-c|x|^3\},\qquad |x|\ge1.
  \]
  Unlike the fixed-time covariance, which is governed directly by the Airy$_1$
  process, the two-time covariance involves the nonlinear variational evolution
  of the entire earlier height profile. Our proof combines cubic-exponential
  mixing of the Airy$_1$ process with a uniform localization estimate for
  intermediate optimizers in the directed landscape. As a consequence, the
  centered spatial averages, normalized by $N^{1/2}$, converge in
  finite-dimensional distributions to a centered Gaussian process whose
  covariance is the space-integrated two-time correlation of the flat KPZ fixed
  point.
\end{abstract}

\maketitle
\medskip
\noindent\textit{Keywords:} KPZ fixed point, directed landscape, Airy$_1$ process,
spatial decorrelation, $\alpha$-mixing, central limit theorem.

\medskip
\noindent\textit{Mathematics Subject Classification (2020):}
60K35, 60G60, 82C22.

\medskip
\tableofcontents
\medskip

\section{Introduction}\label{S:problem}

The Kardar--Parisi--Zhang (KPZ) universality class describes large-scale
fluctuations in random growth, interacting-particle systems, directed polymers,
and last-passage percolation. Its formal
continuum representative is the KPZ equation
\begin{equation}\label{E:KPZ}
  \partial_t h(t,x)=\frac12\partial_x^2h(t,x)
  +\frac12[\partial_x h(t,x)]^2+\xi(t,x)
\end{equation}
for $t>0$ and $x\in\R$, where $\xi$ is space-time white noise. Under the
Cole--Hopf transformation $h=\log u$, equation \eqref{E:KPZ} is related to the
stochastic heat equation
\begin{equation}\label{E:SHE}
  \partial_tu(t,x)=\frac12\partial_x^2u(t,x)+u(t,x)\xi(t,x).
\end{equation}
The universal long-time dynamics of one-dimensional KPZ models is encoded by the
KPZ fixed point, denoted by $\{\h(t,x):(t,x)\in\R_+\times\R\}$. It was
constructed by Matetski, Quastel and Remenik \cite{matetski.quastel.ea:21:kpz}
from TASEP and is invariant under the characteristic $1:2:3$ scaling.
Convergence to the fixed point has since been established for several models;
see, for example, \cite{aggarwal.corwin.ea:24:kpz, virag:20:heat, wu:23:kpz}.

In this paper we study the KPZ fixed point started from flat initial data,
$\h(0,x)\equiv0$. Our principal objective is quantitative \emph{two-time spatial
decorrelation}. At a fixed time, the spatial process is a rescaled Airy$_1$
process. Basu, Busani and Ferrari \cite[Theorem~1.1]{basu.busani.ea:23:on}
proved the sharp logarithmic asymptotic
\[
  \Cov(\Acal_1(0),\Acal_1(u))
  =\exp\left\{-\left(\frac43+o(1)\right)u^3\right\},
  \qquad u\to\infty.
\]
After the one-time scaling identification, this gives a fixed-time logarithmic
covariance exponent proportional to $-|x|^3/t^2$. Thus, at the level of spatial
decay, the fixed-time formula and our two-time upper bound below have the same
cubic-exponential structure. For $0<s<t$, however, the two-time quantity
\[
  \Cov(\h(t,x),\h(s,0))
\]
is of a different nature: the later height $\h(t,x)$ depends nonlinearly on the
entire time-$s$ profile through the variational evolution
\[
  \h(t,x)=\sup_{y\in\R}
  \{\h(s,y)+\Lcal(y,s;x,t)\}.
\]
The agreement of the cubic exponent does not make the two-time result a direct
consequence of the fixed-time formula. Indeed, $\h(t,x)$ is not a second distant
point of the same Airy$_1$ profile, but a nonlinear variational functional of
the entire time-$s$ profile, coupled to an independent future
directed-landscape increment. Consequently, the Airy$_1$ covariance theorem
supplies the fixed-time case and a benchmark for the expected exponent; the
genuinely two-time estimate instead requires an Airy$_1$ mixing input together
with optimizer localization.
The central issue is to show that the part of the time-$s$ profile relevant to
$\h(t,x)$ remains localized near $x$ with cubic-exponentially high probability
and is consequently far from the point $0$.

Our main result is the following.

\begin{theorem}[Two-time spatial decorrelation]\label{T:main-decay}
  For every $s,t>0$, there exist constants $C = C(s,t)>0$ and $c = c(s,t)>0$
  such that
  \begin{equation}\label{E:main-decay}
    \big|\Cov(\h(t,x),\h(s,0))\big|
    \le C\exp\{-c|x|^3\},\qquad |x|\ge1.
  \end{equation}
\end{theorem}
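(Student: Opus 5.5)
We first treat the case $0<s<t$; the case $s>t$ is symmetric after exchanging the roles of the two points (writing $\h(s,0)$ as a variational functional of the time-$t$ profile near $0$), and $s=t$ is the fixed-time Airy$_1$ statement. We use the Markov decomposition
\[
\h(t,x)=\sup_{y\in\R}\{\h(s,y)+\Lcal(y,s;x,t)\},
\]
where the landscape increment $\Lcal(\cdot,s;\cdot,t)$ on $[s,t]$ is independent of $\mathcal F_s$, the $\sigma$-field generated by the time-$s$ profile $\h(s,\cdot)$. The idea is to replace $\h(t,x)$ by a localized version that depends on $\h(s,\cdot)$ only on a window $[x-|x|/2,\,x+|x|/2]$. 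We then use mixing of the Airy$_1$ process to decouple this window from the value $\h(s,0)$.

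\textbf{Step 1: localization.} Define
\[
\h^{\mathrm{loc}}(t,x)=\sup_{|y-x|\le |x|/2}\{\h(s,y)+\Lcal(y,s;x,t)\}.
\]
We show that $\Prob(\h^{\mathrm{loc}}(t,x)\neq \h(t,x))\le Ce^{-c|x|^3}$. This amounts to showing that every optimizer $y^*$ satisfies $|y^*-x|\le |x|/2$ with that probability. For flat data the time-$s$ profile is stationary and has Airy$_1$ tails, and $\Lcal(y,s;x,t)=-(x-y)^2/(t-s)+(t-s)^{1/3}\Acal_2$-type fluctuations, with upper tail $e^{-cr^{3/2}}$ uniformly in $y$ once the parabola is removed. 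The comparison works as follows:
\begin{itemize}
\item any $y$ with $|y-x|=r\ge |x|/2$ has value at most $\sup\h(s,\cdot)$ on a unit interval around $y$, minus $r^2/(t-s)$, plus the fluctuation;
\item the value at $y=x$ is at least $-O(|x|)$, except on an event of probability $e^{-c|x|^3}$.
\end{itemize}
To get the exceeding event, one of the fluctuations must be of order $r^2$, which costs $e^{-cr^3}$ by the $3/2$-power upper tail. Summing over unit intervals at distance $r\ge|x|/2$, using union bounds with the uniform modulus/tail bounds of the landscape and the Airy$_1$ process, gives $\sum_{r\ge |x|/2}e^{-cr^3}\lesssim e^{-c'|x|^3}$. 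Combined with moment bounds on $\h$, Cauchy--Schwarz then gives
\[
|\Cov(\h(t,x)-\h^{\mathrm{loc}}(t,x),\h(s,0))|\le Ce^{-c|x|^3}.
\]
On the bad event we also need $\E|\h^{\mathrm{loc}}|^2<\infty$, which follows from the same tail bounds.

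\textbf{Step 2: decoupling.} Conditionally on $\mathcal F_s$, the variable $\h^{\mathrm{loc}}(t,x)$ equals $F(\h(s,\cdot)|_{W})$ with $W=[x/2,3x/2]$, where $F(g)=\E\sup_{y\in W}\{g(y)+\Lcal(y,s;x,t)\}$ uses the independent landscape. Hence
\[
\Cov(\h^{\mathrm{loc}}(t,x),\h(s,0))=\Cov\big(F(\h(s,\cdot)|_W),\h(s,0)\big).
\]
Here $\h(s,\cdot)$ is a rescaled Airy$_1$ process and $W$ is at distance $\ge |x|/2$ from $0$. We invoke a cubic-exponential $\alpha$-mixing bound for the Airy$_1$ process, namely $\alpha(\sigma(\Acal_1|_{(-\infty,0]}),\sigma(\Acal_1|_{[u,\infty)}))\le Ce^{-cu^3}$. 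Either such a bound is stated earlier, or it can be obtained from the representation of $\Acal_1$ via the directed landscape: at flat data the point-to-line geodesics from far-apart points coalesce only with probability $e^{-cu^3}$, and otherwise they use disjoint regions. We then combine it with the covariance inequality for $\alpha$-mixing with $L^p$ moments (Davydov), $|\Cov|\le C\alpha^{1-2/p}\|F\|_p\|\h(s,0)\|_p$. This yields $Ce^{-c|x|^3}$ after shrinking $c$. Moments of $F$ follow from Step 1 bounds.

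\textbf{Main obstacle.} The hardest part is Step 1's uniform optimizer localization. It needs simultaneous control, over all $y$ in a non-compact set, of the upper tail of $\h(s,y)+\Lcal(y,s;x,t)+(x-y)^2/(t-s)$ with cubic rate in the distance. I would first try the known bound $|\Lcal(y,s;x,t)+(x-y)^2/(t-s)|\le C_\omega(t-s)^{1/3}\log^{2/3}(2+|x|+|y|)$, whose random constant $C_\omega$ has tail $e^{-cM^{3/2}}$, together with the analogous Airy$_1$ sup bound. The parabola of size $|x|^2$ then beats logarithmic growth, and the required event forces $C_\omega\gtrsim |x|^2$, which has probability $e^{-c|x|^3}$.
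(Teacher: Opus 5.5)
Your argument is correct. It has the same skeleton as the paper's proof: truncate the variational formula to $|y-x|\le|x|/2$ (the paper takes $\eta=\tfrac12$), bound the windowed term by the cubic $\alpha$-mixing of Proposition~\ref{P:airy1-alpha-mixing} plus Davydov's inequality, and bound the remainder by Cauchy--Schwarz/H\"older plus a localization probability. The reductions for $s>t$ (symmetry) and $s=t$ (fixed-time result) are also as in the paper. The two main steps are executed differently, however.

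\textbf{Decoupling.} You project onto $\mathcal F_s$, so $\E[\h^{\mathrm{loc}}(t,x)\mid\mathcal F_s]=F(\h(s,\cdot)|_W)$ is a single window functional. Conditional Jensen gives $\|F\|_4\le\|\h^{\mathrm{loc}}(t,x)\|_4$, and Lemma~\ref{L:alpha-to-l4} is applied once. The paper instead freezes the future landscape and applies the mixing bound realization by realization. That forces it to check finite moments of $X(\cdot,\omega')$ for each $\omega'$, and then to integrate using H\"older. Your version is cleaner.

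\textbf{Localization.} You compare directly at time $s$. On a unit interval at distance $r\ge|x|/2$, the parabolic penalty $r^2/(t-s)$ must be beaten either by $\sup\h(s,\cdot)$ or by the supremum of the recentred, stationary Airy$_2$-type fluctuation of $\Lcal(\cdot,s;x,t)$. Both have $e^{-cm^{3/2}}$ upper tails, so each costs $e^{-cr^3}$. The lower tails of $\h(s,x)$ and $\Lcal(x,s;x,t)$ handle the competitor $y=x$, and a union bound over unit intervals gives $e^{-c|x|^3}$.

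The paper instead traces the flat geodesic back to time $0$. It localizes the flat argmax $Z_x$ via the Airy sheet envelope. It then localizes the time-$s$ point of the geodesic from $(z,0)$ to $(x,t)$ around $m_{s,t}(z,x)$, uniformly in $|z-x|\le R$, using the two-Airy-process lemma, maximizer monotonicity and a grid argument. Finally it constructs a Borel optimizer $Y_x$.

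Your route is shorter. It bounds $\Prob(\h^{\mathrm{loc}}(t,x)\neq\h(t,x))$ directly, with no measurable selection, using only unit-interval tail bounds and stationarity. The paper's route yields finer geometric information (uniform localization of intermediate geodesic points) than the theorem needs.

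Two cautions. First, the fallback in your last paragraph does not give the sharp exponent as written. With a \emph{multiplicative} envelope $C_\omega\log^{2/3}(2+|x|+|y|)$, the bad event forces only $C_\omega\gtrsim|x|^2/\log^{2/3}|x|$. That yields $e^{-c|x|^3/\log|x|}$, not $e^{-c|x|^3}$. You need either the unit-interval union bound from your bullets, or an \emph{additive} envelope $\mathfrak C+c_0\log^{2/3}(\cdot)$ as in \eqref{E:airy-sheet-envelope-q0}, under which the bad event forces $\mathfrak C\gtrsim|x|^2$.

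Second, your heuristic for the Airy$_1$ mixing (geodesics using ``disjoint regions'') is not directly usable in the continuum. The directed landscape's built-in independence is across disjoint time intervals, not disjoint spatial regions. This is why Proposition~\ref{P:airy1-alpha-mixing} is proved through exponential LPP, where restricted passage times in complementary half-planes are independent, followed by a finite-dimensional limit and martingale approximation. Since that proposition is established in the paper, this does not affect your proof.
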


The cubic exponent is consistent with the characteristic transversal
localization tails of KPZ geometry. The theorem gives an upper bound at this
natural exponent; we do not identify the sharp constant or prove a matching
lower bound. By the $1:2:3$ scaling of the flat KPZ fixed point,
Theorem~\ref{T:main-decay} may equivalently be written in a scaling-covariant
form. If $0<s\le t$ and $\rho=s/t$, then there are $C_\rho,c_\rho>0$ such that
\begin{equation}\label{E:scaling-covariance-bound}
  \big|\Cov(\h(t,x),\h(s,0))\big|
  \le t^{2/3}C_\rho
  \exp\left\{-c_\rho\frac{|x|^3}{t^2}\right\},
  \qquad |x|\ge t^{2/3}.
\end{equation}
The constants in this formulation depend only on the ratio of the two times.

To place the result in broader context, Widom \cite{widom:04:on} proved the
$x^{-2}$ covariance decay of the Airy$_2$ process, whose polynomial behavior
contrasts with the Airy$_1$ asymptotic recalled above. Spatial decorrelation for
the KPZ equation with narrow-wedge and flat initial data was recently studied in
\cite{gu.pu:25:spatial, chen.jimenez:26:spatial}. Multipoint distribution
formulas for the KPZ fixed point are available through the work of Liu
\cite{liu:22:multipoint}; see also \cite{johansson.rahman:21:multitime}. It is
not apparent how to extract \eqref{E:main-decay} from these formulas. To the
best of our knowledge, Theorem~\ref{T:main-decay} is the first quantitative
two-time spatial covariance estimate for the flat KPZ fixed point.

The proof identifies a simple decorrelation mechanism:
\begin{align*}
  & \text{Airy$_1$ mixing}\ +\ \text{directed-landscape localization}\ \\
  & \qquad\qquad\qquad \qquad  \qquad \qquad \qquad \quad  \Longrightarrow\
         \text{two-time KPZ decorrelation}.
\end{align*}
For $0<s<t$ and $\eta\in(0,1)$, we truncate the variational formula to
$|y-x|\le\eta|x|$ and write
\begin{align*}
  \Cov(\h(t,x),\h(s,0))
  ={} & \Cov(H_{\eta|x|}(t,s,x),\h(s,0)) \\
      & +\Cov(\h(t,x)-H_{\eta|x|}(t,s,x),\h(s,0));
\end{align*}
see \eqref{E:truncated} for the definition of the truncated observable $H$.
Conditionally on the future landscape, the first term is a covariance between
$\h(s,0)$ and a functional of the time-$s$ profile supported at distance at
least $(1-\eta)|x|$ from the origin. It is controlled by a cubic-exponential
half-line $\alpha$-mixing estimate for the Airy$_1$ process. The second term is
supported on the event that a time-$s$ optimizer leaves the localization window.
A uniform directed-landscape localization estimate gives the same
cubic-exponential bound.

As an application, we study the spatial fluctuation field. Define
\[
  S_{N,t} \coloneqq \int_0^N\big(\h(t,x)-\E[\h(t,x)]\big)\dd x.
\]

\begin{theorem}[Multi-time spatial central limit theorem]\label{T:CLT}
As $N\to\infty$,
\[
  \left\{N^{-1/2}S_{N,t}\right\}_{t>0}
  \xrightarrow{\mathrm{f.d.d.}}
  \{\mathcal G(t)\}_{t>0},
\]
where $\mathcal G$ is a centered Gaussian process with covariance
\begin{equation}\label{E:limit-covariance}
  \Cov(\mathcal G(t),\mathcal G(s))
  =\chi(t,s) \coloneqq \int_\R\Cov(\h(t,x),\h(s,0))\dd x.
\end{equation}
\end{theorem}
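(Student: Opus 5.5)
The plan is to use the Cramér--Wold device together with a Bernstein big-block/small-block argument driven by the decorrelation bound of Theorem~\ref{T:main-decay}. First, the limiting covariance $\chi(t,s)$ has to be well defined and finite. By the space-time shift invariance of the flat fixed point, $\Cov(\h(t,x),\h(s,y))=\Cov(\h(t,x-y),\h(s,0))$. For $|x|\ge1$, Theorem~\ref{T:main-decay} gives integrability. For $|x|\le1$, Cauchy--Schwarz bounds the covariance by $\sqrt{\Var\h(t,0)\Var\h(s,0)}<\infty$, using the finite moments of the rescaled GOE Tracy--Widom law. Then
\[
  N^{-1}\Cov(S_{N,t},S_{N,s})=\int_{-N}^{N}\Big(1-\tfrac{|z|}{N}\Big)\Cov(\h(t,z),\h(s,0))\dd z\to\chi(t,s)
\]
by dominated convergence, since the integrand is integrable uniformly in $N$. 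Fix times $t_1,\dots,t_k$ and reals $\lambda_1,\dots,\lambda_k$, and set $X(x)=\sum_j\lambda_j(\h(t_j,x)-\E\h(t_j,x))$. The task then reduces to showing that $N^{-1/2}\int_0^NX(x)\dd x$ converges to a centered normal with variance $\sum_{i,j}\lambda_i\lambda_j\chi(t_i,t_j)$.

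Next comes the blocking. Split $[0,N]$ into alternating big blocks of length $p=p_N$ and small gaps of length $q=q_N$, with $1\ll q\ll p\ll N$, for instance $p=N^{1/2}$ and $q=N^{1/4}$. The small-gap contributions have total variance $O(Nq/p)=o(N)$, again by integrability of the covariance, so they are negligible in $L^2$. For the big-block sums $Y_1,\dots,Y_m$ with $m\approx N/p$, two facts are needed. The first is asymptotic independence: $|\E e^{i\theta\sum Y_l/\sqrt N}-\prod_l\E e^{i\theta Y_l/\sqrt N}|\to0$. The second is a Lindeberg condition, which holds if $\E|Y_l|^{2+\delta}=O(p^{1+\delta/2})$, or alternatively if $\E Y_l^4=O(p^2)$ by a fourth-cumulant bound. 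Given these, the Lindeberg--Feller theorem for independent copies finishes the proof.

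The main obstacle is the asymptotic independence step. Theorem~\ref{T:main-decay} controls covariances, not characteristic functions of whole blocks. My approach is to reuse the ingredients of its proof at the level of an $\alpha$-mixing coefficient for the multi-time field. Write all $\h(t_j,\cdot)$ as the variational evolution from the earliest common time through the directed landscape $\Lcal$. The landscape has independent increments on disjoint time strips and is spatially stationary, and the earliest profile is an Airy$_1$ process with cubic-exponential half-line $\alpha$-mixing. Truncate every variational formula to windows of size $\eta q$ around the block. By the uniform optimizer-localization estimate, the truncated fields on blocks separated by $q$ differ from the true ones only on events of probability $\le Ce^{-cq^3}$ per unit length. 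The truncated fields are functionals of the initial Airy$_1$ profile on separated regions and of landscape restricted to spatially separated regions. The landscape's own spatial mixing I would again obtain from localization, coupling it with independent copies via geodesic confinement. This yields $\alpha(q)\le C N e^{-cq^3}$ for separations between blocks. A telescoping bound on the characteristic functions then gives an error $\le 16\,m\,\alpha(q)$ plus localization error, both $o(1)$.

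For the moment bound I would use the same truncation. Within a block, the fourth cumulant of $\int X$ can be expressed via the mixing coefficients, giving $\E Y_l^4\le C(p^2+p\int_0^\infty r^2\alpha(r)^{1/2}\dd r)$ via standard moment inequalities for $\alpha$-mixing fields with bounded $(4+\delta)$ moments. The bounded moments follow from the Tracy--Widom-type tails of $\h(t,x)$.
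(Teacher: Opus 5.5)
Your preliminary steps are sound: finiteness of $\chi$, the Fej\'er-type limit of $N^{-1}\Cov(S_{N,t},S_{N,s})$, negligibility of the small gaps, and the Lindeberg bookkeeping once $\E Y_l^4=O(p^2)$ is known. The gap is the asymptotic independence of the big blocks. Both your characteristic-function step and your fourth-moment bound depend on it, and nothing in the paper provides it:
\begin{itemize}
\item Theorem~\ref{T:main-decay} controls only covariances of point values.
\item Proposition~\ref{P:airy1-alpha-mixing} is a single-time statement.
\item Proposition~\ref{P:flat-to-point-geodesic-localization} localizes the optimizer for one endpoint only.
\end{itemize}
After truncation, your block variables are still functionals of $\Lcal(y,t_1;x,t_j)$ over spatially separated windows of one and the same landscape. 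The defining properties of $\Lcal$ give independence only across disjoint time intervals, not across disjoint spatial regions. So geodesic confinement tells you which landscape values matter, but gives no reason for those values to be nearly independent. The phrase ``coupling it with independent copies'' presupposes a local source of randomness that the directed landscape does not come with. To close this step you would have to return to a prelimit with i.i.d.\ weights, as in the proof of Proposition~\ref{P:airy1-alpha-mixing}. You would then need to prove half-plane confinement for line-to-point geodesics ending at several levels simultaneously, and pass to the limit using joint multi-time convergence to the flat KPZ fixed point. That is a new multi-time mixing theorem, not a routine consequence of what the paper proves.

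The idea that makes all of this unnecessary is positive association. The paper shows that $\h$ is associated in three steps. The stochastic heat equation field is associated. The logarithm and positive affine rescalings preserve this, so the rescaled KPZ equation is associated. Association passes to weak limits, hence to $\h$. For nonnegative $a_\ell$, the unit-block variables $X_j=\int_{j-1}^{j}\sum_\ell a_\ell(\h(t_\ell,x)-\E\h(t_\ell,0))\dd x$ then form a stationary associated sequence. Newman's CLT for such sequences needs only $\sum_j\Cov(X_0,X_j)<\infty$, which is exactly what Theorem~\ref{T:main-decay} supplies; no control of the joint laws of blocks is required. Because association is preserved only by nondecreasing maps, this covers nonnegative coefficients only. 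The paper obtains arbitrary coefficients from two further facts: tightness, which follows from $\E S_{N,t}^2/N\to\chi(t,t)$, and the fact that a random vector whose nonnegative linear combinations are all centered Gaussian with quadratic-form variance must itself be Gaussian.
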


The kernel $\chi$ is the space-integrated two-time correlation, or dynamic
susceptibility, of the flat KPZ fixed point. It is symmetric, positive
semidefinite, and homogeneous of degree $4/3$:
\begin{equation}\label{E:chi-scaling}
  \chi(\lambda t,\lambda s)=\lambda^{4/3}\chi(t,s),
  \qquad \lambda>0.
\end{equation}
On the diagonal,
\[
  \chi(t,t)=2^{4/3}t^{4/3}\sigma^2,
  \qquad
  \sigma^2 \coloneqq \int_\R
  \Cov(\Acal_1(x),\Acal_1(0))\dd x.
\]
At a single fixed time, the corresponding CLT follows from
\cite[Theorem~1.1]{pu:23:ergodicity}. Theorem~\ref{T:CLT} gives simultaneous convergence at
finitely many times. Its proof combines Theorem~\ref{T:main-decay} with
spatial stationarity, association, and Newman's central limit theorem
\cite{newman:80:normal}.

\begin{remark}[Further questions]\label{rem:further-questions}
  The results above leave several natural problems open. First, it would be
  desirable to determine the sharp asymptotic behavior of
  $\Cov(\h(t,x),\h(s,0))$ as $|x|\to\infty$, including a matching lower bound
  and the leading constant. Second, identifying the dynamic susceptibility
  $\chi(t,s)$ would determine the law of the limiting Gaussian process in
  Theorem~\ref{T:CLT}. The homogeneity \eqref{E:chi-scaling} alone does not
  imply that this process is fractional Brownian motion; one would additionally
  need, for example, stationary increments and the corresponding covariance
  identity. Third, a functional central limit theorem remains open. Its proof
  would require tightness in the time variable, beyond the finite-dimensional
  convergence established here. Finally, it is natural to ask whether the
  mixing-plus-localization mechanism extends to other initial data and to more
  general KPZ fixed-point evolutions.
\end{remark}

The paper is organized as follows. Section~\ref{S:inputs} records the
variational representation and introduces the truncated observable.
Section~\ref{S:airy1-mixing} proves the quantitative Airy$_1$ $\alpha$-mixing
estimate needed for the distant-profile term. Section~\ref{S:localization}
establishes uniform localization of intermediate directed-landscape optimizers.
The covariance theorem is proved in Section~\ref{S:covariance-proof}; the
Gaussian fluctuation theorem and properties of $\chi$ are discussed in
Section~\ref{S:fluctuations}.

\section{KPZ fixed point, directed landscape and truncation}\label{S:inputs}

We collect the variational input used throughout the paper. The KPZ fixed point
admits a variational representation in terms of the directed landscape, which
was introduced by Dauvergne, Ortmann and Vir\'ag
\cite{dauvergne.ortmann.ea:22:directed}. Let 
\begin{align*}
  \Lcal(y,s; x, t), \quad x,y\in \R, s<t
\end{align*}
denote the directed landscape. It is the universal space-time random geometry
arising as the $1:2:3$ scaling limit of models in the KPZ universality class;
see \cite{dauvergne.ortmann.ea:22:directed} for more information.  Nica, Quastel
and Remenik \cite[Corollary~4.2]{nica.quastel.ea:20:one-sided*1} show that the KPZ fixed point
started from an initial profile $\h_0$ satisfies 
\begin{align} \label{E:var2}
  \h(t,x)=\sup_{y\in \R}\left\{\h_0(y) + \Lcal(y,0; x,t) \right\}.
\end{align}
More generally, the directed landscape has independent increments and satisfies
metric composition; see
\cite[Definition 10.1(II)--(III)]{dauvergne.ortmann.ea:22:directed}. Hence, for
$s<t$,
\begin{align}\label{E:variational}
          \h(t,x)=\sup_{y\in \R}\left\{\h(s, y) + \Lcal(y,s; x,t) \right\},
\end{align}
where the future directed landscape increment $\{\Lcal(y,s; x, t)\}$ is
independent of $\h(s,y)$.

The flat KPZ fixed point is jointly stationary under common spatial shifts:
by \cite[Theorem~4.5(iv)]{matetski.quastel.ea:21:kpz},
\begin{equation}\label{E:joint-spatial-stationarity}
  \{\h(t,x+a):t>0,\ x\in\R\}
  \overset{\mathrm d}=\{\h(t,x):t>0,\ x\in\R\},
  \qquad a\in\R.
\end{equation}

The variational formula \eqref{E:variational} is the starting point to analyze
the two-time covariance of the flat KPZ fixed point. For $t>s>0$, $r>0$ and
$x\in \R$, we introduce the truncated observable
\begin{equation}
  H_r(t,s,x)=\sup_{|y-x|\le r}\{\h(s,y)+\Lcal(y,s ;x,t )\}.
  \label{E:truncated}
\end{equation}
In order to study $\Cov(\h(t,x), \h(s,0))$, we write
\begin{align}\label{E:cov12}
\Cov(\h(t,x), \h(s,0)) &= \Cov(\h(t,x)- H_{\eta |x|}(t,s,x), \h(s,0)) \nonumber \\
&\quad \quad + \Cov(H_{\eta|x|}(t,s,x), \h(s,0))
\end{align}
with $\eta\in (0, 1)$. To estimate the second covariance on the right-hand side
of \eqref{E:cov12}, we need the following uniform moments bound on $H$. 

\begin{lemma}\label{L:truncated-moments}
  For every $0<s<t$ and every $\eta\in(0,1)$,
  \begin{equation}
    \sup_{|x|\ge1}
    \|H_{\eta |x|}(t,s,x)\|_4 <\infty.
    \label{E:truncated-moments}
  \end{equation}
\end{lemma}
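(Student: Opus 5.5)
We bound $H\coloneqq H_{\eta|x|}(t,s,x)$ from both sides by quantities whose fourth moments are controlled uniformly in $x$. The lower bound is trivial: since $y=x$ lies in the truncation window,
\[
  H\ \ge\ \h(s,x)+\Lcal(x,s;x,t).
\]
By \eqref{E:joint-spatial-stationarity}, $\h(s,x)\overset{\mathrm d}=\h(s,0)$, which is a rescaled GOE Tracy--Widom variable. By stationarity of the directed landscape, $\Lcal(x,s;x,t)\overset{\mathrm d}=(t-s)^{1/3}\mathrm{TW}_{\mathrm{GUE}}$. Both have all moments, and their laws do not depend on $x$. Hence $\|H^-\|_4\le \|\h(s,0)\|_4+\|\Lcal(0,s;0,t)\|_4$, uniformly in $x$.

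For the upper bound, $H\le \h(t,x)$ because the truncated supremum is dominated by the full supremum in \eqref{E:variational}. By \eqref{E:joint-spatial-stationarity}, $\h(t,x)\overset{\mathrm d}=\h(t,0)$, again a rescaled GOE Tracy--Widom variable, with all positive moments finite and independent of $x$. So $\|H^+\|_4\le\|\h(t,0)^+\|_4$.

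Combining the two bounds gives $|H|\le |\h(t,x)|+|\h(s,x)|+|\Lcal(x,s;x,t)|$. Minkowski's inequality then yields
\[
  \sup_{|x|\ge1}\|H_{\eta|x|}(t,s,x)\|_4\le \|\h(t,0)\|_4+\|\h(s,0)\|_4+\|\Lcal(0,s;0,t)\|_4<\infty .
\]

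No step here is a real obstacle. The only inputs are the one-point laws of the flat KPZ fixed point and of the directed landscape, together with the fact that flat initial data makes the one-point law independent of $x$. In particular, no localization or uniform control of the supremum over the growing window $|y-x|\le\eta|x|$ is needed: the upper bound from the full variational formula removes the dependence on $r$ entirely. The one point to state explicitly is the measurability and finiteness of $H$. This follows from the continuity of $\h(s,\cdot)$ and $\Lcal$ on the compact window.
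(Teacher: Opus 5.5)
Your proof is correct and follows the paper's argument: you sandwich $\h(s,x)+\Lcal(x,s;x,t)\le H_{\eta|x|}(t,s,x)\le \h(t,x)$, then use spatial stationarity and the fact that the GOE and GUE Tracy--Widom one-point laws have finite moments. This gives the $L^4$ bound uniformly in $x$.
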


\begin{proof}
  Denote
  \[
    W_x=\h(s,x)+\Lcal(x,s;x,t).
  \]
  Since $x\in[x-r,x+r]$, the single candidate $W_x$ is included in the
  truncated supremum.  In addition, the truncated supremum is bounded above by
  the full variational supremum and hence we have
  \[
    W_x\le H_{\eta|x|}(t,s,x)\le \h(t,x).
  \]
  Therefore, $|H_{\eta|x|}(t,s,x)|\le |\h(t,x)|+|W_x|$.  By spatial
  stationarity of the flat KPZ fixed point, the laws of $\h(t,x)$ and $\h(s,x)$
  do not depend on $x$.  Similarly, by the spatial stationarity of the directed
  landscape (see \cite[Lemma 10.2 (2)]{dauvergne.ortmann.ea:22:directed}),
  $\Lcal(x,s;x,t)$ has the same law as $\Lcal(0,s;0,t)$.   Hence the $L^4$
  norms of $\h(t,x)$ and $W_x$ do not depend on $x$. Furthermore, the random
  variables $\h(t,0)$ and $\Lcal(0,s;0,t)$ have the scaled GOE and GUE
  Tracy-Widom distributions respectively, and hence have finite moments. This
  gives the claimed uniform bound.
\end{proof}

The role of the truncation is the following. When we estimate the covariance $
\Cov(H_{\eta|x|}(t,s,x), \h(s,0))$, we will first condition on the sigma-field
$\sigma(\Lcal(y,s;x',t):x',y\in\R)$. In that case, $H_{\eta|x|}(t,s,x)$ is
measurable with respect to the sigma-field $\sigma(\h(s,y): |y-x|\leq \eta|x|)$.
Note that $|y-x|\leq \eta|x|$ implies $x-\eta|x|\leq y\leq \eta|x|+x$ and hence
$y$ is away from origin with distance $(1-\eta)|x|$. In other words, we need to
analyze the covariance between $\h(s,0)$ and an $L^4$-integrable random variable
which is measurable with respect to $\sigma(\h(s,y): y\geq (1-\eta)x)$ (suppose
that $x$ is positive). This will be done by the $\alpha$-mixing of the Airy$_1$
process, as we will discuss in the next section. 

\section{\texorpdfstring{Airy$_1$ $\alpha$-mixing}{Airy1 alpha-mixing}}\label{S:airy1-mixing}

The first quantitative input in the proof of Theorem~\ref{T:main-decay} is an
$\alpha$-mixing estimate for the flat KPZ fixed point at a fixed time. This
follows essentially from the $\alpha$-mixing estimate for the Airy$_1$ process.
Note that it is proved in \cite{basu.bhattacharjee:24:limit} that the Airy$_1$
process is strong mixing (see \cite[Remark 1]{basu.bhattacharjee:24:limit}).
Indeed, as we shall see, the estimate on the probability in Remark 1 of
\cite{basu.bhattacharjee:24:limit} leads to a cubic-exponential decay for the
$\alpha$-mixing of the Airy$_1$ process.

Let us introduce some notations of the relevant exponential LPP model. Consider
the last passage percolation on $\mathbb{Z}^2$ with i.i.d. $\rm Exp(1)$ passage
times on the vertices.  For $s\in \R$, let 
\[
  u_N(s)= (N-\floor{s(2N)^{2/3}}, N+\floor{s(2N)^{2/3}})\in \mathbb{Z}^2.
\]
Denote by $L_r$ the line $\{(x,y) \in \mathbb{Z}^2:x+y=r\}$. Let $T_N^*(s)$
denote the last passage time from $L_0$ to $u_N(s)$ (i.e., the maximal weight
among all paths that start at some point in $L_0$ and end at $u_N(s)$ excluding the
last vertex).

\begin{proposition}\label{P:airy1-alpha-mixing}
  For each fixed $s>0$, there are constants $C_s,c_s>0$ such that the process
  $\{\h(s,y): y\in \R\}$ has the $\alpha$-mixing coefficient
  \begin{align}
    \alpha_s(d)& \coloneqq 
    \sup_a
    \sup_{\substack{
    E_-\in\sigma(\h(s,y):y\le a)\\
    E_+\in\sigma(\h(s,y):y\ge a+d)}}
    |\Prob(E_-\cap E_+)-\Prob(E_-)\Prob(E_+)| \nonumber \\
    &
    \le
    C_s \e^{-c_s d^3/s^2},
    \qquad d\ge0.
    \label{E:airy1-alpha-mixing}
  \end{align}
\end{proposition}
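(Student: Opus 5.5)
We plan to reduce the statement to the Airy$_1$ process via scaling, prove the mixing bound first in exponential LPP by a coupling argument, and then pass to the limit. By the $1:2:3$ scaling of the flat KPZ fixed point, $\{\h(s,y)\}_y\overset{\mathrm d}=\{s^{1/3}\h(1,s^{-2/3}y)\}_y$. Since $\alpha$-mixing coefficients are invariant under deterministic bijections of the state space and reparametrizations of the index set, we get $\alpha_s(d)=\alpha_1(s^{-2/3}d)$. Moreover $\h(1,\cdot)$ is a deterministic affine rescaling of the Airy$_1$ process, $\h(1,y)=2^{1/3}\Acal_1(2^{-2/3}y)$. Hence it suffices to show $\alpha_{\Acal_1}(d)\le C\e^{-cd^3}$; the factor $s^{-2}$ then comes out as $(s^{-2/3}d)^3=d^3/s^2$.

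For the Airy$_1$ bound we will work with the point-to-line exponential LPP times $T_N^*(u)$. It is classical, and is the input used in \cite{basu.bhattacharjee:24:limit}, that after centering by $4N$ and scaling by $2^{4/3}N^{1/3}$ the process $u\mapsto T_N^*(u)$ converges in finite-dimensional distributions to the Airy$_1$ process. The mixing mechanism is geometric. Fix $a$, $d$, and set $m=a+d/2$. Let $G$ be the event that every point-to-line geodesic ending at $u_N(u)$ with $u\le a$ starts on $L_0$ to the left of the antidiagonal position indexed by $m$, and that every geodesic ending at $u_N(u)$ with $u\ge a+d$ starts to the right of it. Because geodesics ending at ordered endpoints are ordered, this reduces to the two extreme geodesics ending at $u_N(a)$ and $u_N(a+d)$. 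On $G$ we can go further. Exploiting geodesic ordering with non-crossing, the values $T^*_N(u)$ for $u\le a$ coincide with passage times restricted to the region left of a fixed line through $m$, and those for $u\ge a+d$ coincide with passage times restricted to the right of it; we will check that the relevant geodesics are also separated away from $L_0$. We then build an independent copy by resampling the weights on one side. Events $E_\pm$ from the two sides then satisfy $|\Prob(E_-\cap E_+)-\Prob(E_-)\Prob(E_+)|\le 4\Prob(G^c)$.

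The main estimate is $\Prob(G^c)\le C\e^{-cd^3}$ uniformly in $N$ large. This is the probability that a point-to-line geodesic has transversal fluctuation of order $d/2$ in units of $N^{2/3}$. Transversal fluctuation tails of the form $\e^{-cr^3}$ for point-to-line geodesics in exponential LPP are the estimate recorded in \cite[Remark 1]{basu.bhattacharjee:24:limit}, and we will invoke it directly. I expect this step to be the main obstacle, because one needs the sharp cubic (not merely stretched-exponential) exponent and uniformity over the endpoint $a$. Translation invariance in the $u$ variable handles the uniformity. If the cubic exponent is not available verbatim, the fallback is the standard argument: a geodesic reaching distance $r$ must beat the line-to-point passage time by a parabolic amount $\asymp r^2$ on scale $N^{1/3}$ over a time window of order $r^{-1}$, which yields $\e^{-cr^3}$ via one-point upper-tail and lower-tail moderate deviations.

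Finally we pass to the limit $N\to\infty$. The $\alpha$-mixing coefficient over the full sigma-fields is approximated by events depending on finitely many coordinates, with finite-dimensional cylinder events having continuity-set boundaries. For such events the finite-$N$ bound transfers to the limit by f.d.d. convergence. A monotone class / $\pi$-$\lambda$ argument, using continuity of Airy$_1$ paths and the fact that $\sigma(\Acal_1(u):u\le a)$ is generated by rational-time cylinders, then extends the bound to the full sigma-fields. The resulting bound is $C\e^{-cd^3}$ for $d\ge d_0$, and for $d<d_0$ it holds trivially after enlarging $C$, since $\alpha\le 1/4$.
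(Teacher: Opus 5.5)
Your plan is essentially the paper's proof: scaling reduction to $\Acal_1$, point-to-line exponential LPP with a separating line parallel to the diagonal at the midpoint $a+d/2$, independence of passage times restricted to the two half-planes, the cubic transversal-fluctuation bound from \cite[Remark~1]{basu.bhattacharjee:24:limit}, and then an f.d.d.\ limit followed by extension to the full half-line sigma-fields (your continuity-set and monotone-class route is an equivalent substitute for the paper's Radon approximation and L\'evy martingale convergence). The one fix is that $G$ must be the event that the \emph{entire} extreme geodesics to $u_N(a)$ and $u_N(a+d)$ stay in their half-planes, not merely that their starting points on $L_0$ lie on the correct side, since starting-point control plus ordering does not force agreement with the restricted passage times; this full-path event is what the paper uses and what your transversal-fluctuation estimate actually bounds.
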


\begin{proof}
  By the scaling invariance of the KPZ fixed point (see \cite[Theorem
  4.5(i)]{matetski.quastel.ea:21:kpz}), the process $\{\h(s,y): y\in \R\}$ has
  the same distribution as $\{(2s)^{1/3}\mathcal{A}_1((2s)^{-2/3}y): y\in \R\}$.
  Moreover, by the stationarity of the Airy$_1$ process, in order to prove
  \eqref{E:airy1-alpha-mixing}, it suffices to prove that for some $C, c>0$
  \begin{equation}
    \alpha(d) \coloneqq 
    \sup_{\substack{
    E_-\in\sigma(\mathcal{A}_1(y):y\le 0)\\
    E_+\in\sigma(\mathcal{A}_1(y):y\ge d)}}
    |\Prob(E_-\cap E_+)-\Prob(E_-)\Prob(E_+)|
    \le
    C \e^{-c d^3},
    \quad d\ge0.
    \label{E:airy1-alpha-mixing2}
  \end{equation}

  It is enough to prove \eqref{E:airy1-alpha-mixing2} for $d\ge d_0$, where
  $d_0$ is a fixed constant for which the separation estimate in
  \cite[Remark~1]{basu.bhattacharjee:24:limit} applies. The remaining bounded
  range follows from the trivial bound $\alpha(d)\le1$ after increasing $C$.

  Fix finite coordinate sets $u_1<\cdots<u_m\le0$ and
  $d\le v_1<\cdots<v_n$, and write
  \[
    U_N \coloneqq (X_N(u_1),\ldots,X_N(u_m)),
    \qquad
    V_N \coloneqq (X_N(v_1),\ldots,X_N(v_n)),
  \]
  where
  \[
    X_N(s) \coloneqq \frac{T_N^*(s)-4N}{2^{4/3}N^{1/3}}.
  \]
  Put $q_N=(2N)^{2/3}$, write $\psi(z_1,z_2)=z_1-z_2$, and set
  \begin{align*}
    b_{N,d}&=-\floor{dq_N},\\
    \mathsf H^+_{N,d}&=\{z\in\Z^2:\psi(z)\ge b_{N,d}+1\},\\
    \mathsf H^-_{N,d}&=\{z\in\Z^2:\psi(z)\le b_{N,d}\}.
  \end{align*}
  These half-planes are disjoint and cover $\Z^2$. Moreover,
  \[
    \psi(u_N(s))=-2\floor{s q_N},
  \]
  so, for all sufficiently large $N$, the endpoints with $s\le0$ lie in
  $\mathsf H^+_{N,d}$ and those with $s\ge d$ lie in $\mathsf H^-_{N,d}$. Thus
  the separator has rotated coordinate $\psi\simeq-dq_N$ with our endpoint
  convention. Remark~1 of \cite{basu.bhattacharjee:24:limit} prints the
  separator with the opposite sign. With the displayed definition of $u_N(s)$,
  the sign-compatible, side-specific version of its construction is the one
  used here. The transverse-excursion estimate invoked there is unchanged by
  interchanging the two coordinate axes in the i.i.d. exponential environment,
  and therefore gives the same cubic-exponential bound for this orientation.

  Writing $\omega(z)$ for the weight at $z\in\Z^2$, define, for $s\le0$ and
  $s\ge d$, respectively,
  \begin{align*}
    T_N^{**,+}(s)
      &\coloneqq
        \max_{\substack{\pi:L_0\to u_N(s)\\
                        \pi\subset\mathsf H^+_{N,d}}}
        \sum_{z\in\pi\setminus\{u_N(s)\}}\omega(z),\\
    T_N^{**,-}(s)
      &\coloneqq
        \max_{\substack{\pi:L_0\to u_N(s)\\
                        \pi\subset\mathsf H^-_{N,d}}}
        \sum_{z\in\pi\setminus\{u_N(s)\}}\omega(z).
  \end{align*}
  Let
  \[
    X_N^{**,\pm}(s)
    \coloneqq\frac{T_N^{**,\pm}(s)-4N}{2^{4/3}N^{1/3}}
  \]
  and set
  \begin{align*}
    U_N^{**}&\coloneqq
      (X_N^{**,+}(u_1),\ldots,X_N^{**,+}(u_m)),\\
    V_N^{**}&\coloneqq
      (X_N^{**,-}(v_1),\ldots,X_N^{**,-}(v_n)).
  \end{align*}
  The first vector depends only on the weights in $\mathsf H^+_{N,d}$ and the
  second only on those in $\mathsf H^-_{N,d}$. Hence $U_N^{**}$ and $V_N^{**}$
  are independent.

  Let $\Gamma_N(s)$ be the almost surely unique unrestricted line-to-point
  geodesic from $L_0$ to $u_N(s)$. Planar ordering of these geodesics gives
  \begin{align*}
    \{U_N\ne U_N^{**}\}
      &=\{\exists i:\Gamma_N(u_i)\not\subset\mathsf H^+_{N,d}\}
        =\{\Gamma_N(u_m)\not\subset\mathsf H^+_{N,d}\},\\
    \{V_N\ne V_N^{**}\}
      &=\{\exists j:\Gamma_N(v_j)\not\subset\mathsf H^-_{N,d}\}
        =\{\Gamma_N(v_1)\not\subset\mathsf H^-_{N,d}\}.
  \end{align*}
  Here uniqueness ensures that an unrestricted and a restricted passage time
  agree exactly when the unrestricted geodesic stays in the prescribed
  half-plane. The controlling endpoints are at rotated distance at least
  $\floor{dq_N}$ from the separator. The separation estimate used in
  \cite[Remark~1]{basu.bhattacharjee:24:limit}, applied to these two endpoint
  events, therefore yields constants $C,c>0$, independent of $m,n$ and of the
  sampled locations, such that
  \[
    F_{N,d}\coloneqq\{U_N\ne U_N^{**}\}\cup\{V_N\ne V_N^{**}\},\qquad
    \Prob(F_{N,d})\le C \e^{-c d^3}.
  \]
  This holds for all sufficiently large $N$; the threshold may depend on the
  fixed coordinate sets, which is sufficient for the finite-dimensional limit.

  Now let $f:\R^m\to[-1,1]$ and $g:\R^n\to[-1,1]$ be bounded continuous test
  functions.  Since $U_N^{**}$ and $V_N^{**}$ are independent,
  \[
    \E\bigl[f(U_N^{**})g(V_N^{**})\bigr]
    =
    \E\bigl[f(U_N^{**})\bigr]\E\bigl[g(V_N^{**})\bigr].
  \]
  Moreover, on $F_{N,d}^c$ we have $U_N=U_N^{**}$ and $V_N=V_N^{**}$, so
  \[
    \bigl|\E\bigl[f(U_N)g(V_N)\bigr]-\E\bigl[f(U_N^{**})g(V_N^{**})\bigr]\bigr|
    \le 2\Prob(F_{N,d}),
  \]
  and similarly
  \begin{align*}
    &\bigl|\E\bigl[f(U_N)\bigr]-\E\bigl[f(U_N^{**})\bigr]\bigr|
    \le 2\Prob(F_{N,d}),\\
    \quad
   & \bigl|\E\bigl[g(V_N)\bigr]-\E\bigl[g(V_N^{**})\bigr]\bigr|
    \le 2\Prob(F_{N,d}).
  \end{align*}
  Therefore,
  \[
    \bigl|\Cov\bigl(f(U_N),g(V_N)\bigr)\bigr|
    \le 6\Prob(F_{N,d})
    \le 6C \e^{-c d^3}.
  \]

  Passing to the limit $N\to\infty$ through the finite-dimensional convergence
  from \cite[Theorem 1.4]{basu.bhattacharjee:24:limit}, we first obtain the same
  bound for the limit vectors
  \begin{align*}
    & \widetilde U \coloneqq 2^{1/3}(\Acal_1(2^{-2/3}u_1),\ldots,\Acal_1(2^{-2/3}u_m)), \\
    & \widetilde V \coloneqq 2^{1/3}(\Acal_1(2^{-2/3}v_1),\ldots,\Acal_1(2^{-2/3}v_n)).
  \end{align*}
  The factor $2^{1/3}$ does not change sigma-fields, and replacing $(u_i,v_j,d)$
  by $(2^{2/3}u_i,2^{2/3}v_j,2^{2/3}d)$ transfers the same estimate to the
  Airy$_1$ coordinate vectors
  \[
    U \coloneqq (\Acal_1(u_1),\ldots,\Acal_1(u_m)), \qquad
    V \coloneqq (\Acal_1(v_1),\ldots,\Acal_1(v_n)),
  \]
  after absorbing the factor $4=(2^{2/3})^3$ into the constant $c$.  That is,
  \begin{align}\label{E:covfg}
    \bigl|\Cov\bigl(f(U),g(V)\bigr)\bigr|
    \le 6C \e^{-c d^3}
  \end{align}
  for all bounded continuous $f$ and $g$ with values in $[-1,1]$. Let
  $A\in\mathcal B(\R^m)$ and $B\in\mathcal B(\R^n)$. Since every Borel
  probability measure on a Euclidean space is Radon, there are
  $f_k\in C_b(\R^m;[0,1])$ and $g_k\in C_b(\R^n;[0,1])$ such that as $k\to\infty$
  \[
    \E[|f_k(U)-\one_{\{U\in A\}}|]\longrightarrow0,\qquad
    \E[|g_k(V)-\one_{\{V\in B\}}|]\longrightarrow0.
  \]
  Since these random variables take values in $[0,1]$, we have as $k\to\infty$
  \begin{align*}
    &\big|\Cov(f_k(U),g_k(V))
       -\Cov(\one_{\{U\in A\}},\one_{\{V\in B\}})\big|\\
    &\qquad\le
      2\E[|f_k(U)-\one_{\{U\in A\}}|]
      +2\E[|g_k(V)-\one_{\{V\in B\}}|]\longrightarrow0.
  \end{align*}
  Passing to the limit in \eqref{E:covfg} shows that, for every pair of
  finite-cylinder events
  $E_-\in\sigma(\Acal_1(u_1),\ldots,\Acal_1(u_m))$ and
  $E_+\in\sigma(\Acal_1(v_1),\ldots,\Acal_1(v_n))$,
  \[
    \bigl|\Prob(E_-\cap E_+)-\Prob(E_-)\Prob(E_+)\bigr|
    \le 6C \e^{-c d^3}.
  \]
  Furthermore, for bounded $[0,1]$-valued functions $M$ and $N$ which are
  measurable with respect to the two finite-cylinder sigma-fields, we have
    \begin{align}\label{E:MN}
    |\Cov(M,N)|
    =
    \left|\int_0^1\int_0^1
    \Cov(\one_{\{M>r\}},\one_{\{N>q\}})\, \dd r\dd q\right| \leq 6C \e^{-c d^3}.
  \end{align}
  
  Choose increasing finite rational sets $R_k^-\subset
  \mathbb{Q}\cap(-\infty,0]$ and $R_k^+\subset \mathbb{Q}\cap[d,\infty)$ such
  that
  \[
    \bigcup_{k\ge1} R_k^- = \mathbb{Q}\cap(-\infty,0], \qquad
    \bigcup_{k\ge1} R_k^+ = \mathbb{Q}\cap[d,\infty),
  \]
  and set
  \[
    \mathcal{F}_k^- \coloneqq \sigma(\Acal_1(q):q\in R_k^-), \qquad
    \mathcal{F}_k^+ \coloneqq \sigma(\Acal_1(q):q\in R_k^+).
  \]
  By the continuity of the Airy$_1$ sample paths,
  \[
    \sigma(\Acal_1(y):y\le 0)=\sigma\!\Bigl(\bigcup_{k\ge1}\mathcal{F}_k^-\Bigr), \qquad
    \sigma(\Acal_1(y):y\ge d)=\sigma\!\Bigl(\bigcup_{k\ge1}\mathcal{F}_k^+\Bigr).
  \]
  Now fix events $E_-\in\sigma(\Acal_1(y):y\le0)$ and
  $E_+\in\sigma(\Acal_1(y):y\ge d)$, and let
  \[
    M_k^- \coloneqq \E[\one_{E_-}\mid\mathcal{F}_k^-], \qquad
    M_k^+ \coloneqq \E[\one_{E_+}\mid\mathcal{F}_k^+].
  \]
  Then $0\le M_k^{\pm}\le1$, each $M_k^\pm$ is measurable with respect to a
  finite rational-cylinder sigma-field, and L\'{e}vy's martingale convergence
  theorem (see \cite[Section 14.2]{williams:91:probability}) gives
  $M_k^-\to\one_{E_-}$ and $M_k^+\to\one_{E_+}$ almost surely and in $L^1$.
  Applying \eqref{E:MN} to $M_k^-$ and $M_k^+$ and then passing to the limit
  yields
  \[
    \bigl|\Prob(E_-\cap E_+)-\Prob(E_-)\Prob(E_+)\bigr|
    =
    \lim_{k\to\infty} \bigl|\Cov(M_k^-,M_k^+)\bigr|
    \le 6C \e^{-c d^3}.
  \]
  This proves \eqref{E:airy1-alpha-mixing2} and hence completes the proof of
  Proposition~\ref{P:airy1-alpha-mixing}.
\end{proof}

\begin{lemma}\label{L:alpha-to-l4}
  Fix $s>0$. If $I\subset(-\infty,a]$ and $J\subset[a+d,\infty)$ are Borel sets,
  and if $F\in L^4(\sigma(\h(s,y):y\in I))$ and $G\in L^4(\sigma(\h(s, y):y\in
  J))$, then
  \begin{equation}
    |\Cov(F,G)|
    \le
    C_s \e^{-c_s d^3/s^2}\|F\|_4\|G\|_4.
    \label{E:airy1-mixing}
  \end{equation}
\end{lemma}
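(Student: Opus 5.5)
This is the standard passage from $\alpha$-mixing to covariance bounds for $L^p$ random variables: a truncation argument in the spirit of Davydov's inequality. The input is Proposition~\ref{P:airy1-alpha-mixing}. Since $\sigma(\h(s,y):y\in I)\subset\sigma(\h(s,y):y\le a)$ and $\sigma(\h(s,y):y\in J)\subset\sigma(\h(s,y):y\ge a+d)$, it gives $|\Prob(E\cap E')-\Prob(E)\Prob(E')|\le\alpha_s(d)$ for all $E$ in the first sigma-field and $E'$ in the second.

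\textbf{Step 1 (bounded variables).} Let $M,N$ be bounded variables, measurable with respect to the two sigma-fields, with $|M|\le K_1$ and $|N|\le K_2$. We first show $|\Cov(M,N)|\le 4K_1K_2\,\alpha_s(d)$. Set $\xi=\operatorname{sgn}(\E[N\mid\mathcal F_-]-\E N)$, where $\mathcal F_-$ is the first sigma-field. Then
\[
|\Cov(M,N)|\le K_1\,\E\bigl|\E[N\mid\mathcal F_-]-\E N\bigr| = K_1\Cov(\xi,N).
\]
Repeat the same step with $\eta=\operatorname{sgn}(\E[\xi\mid\mathcal F_+]-\E\xi)$ to get $|\Cov(M,N)|\le K_1K_2\,|\Cov(\xi,\eta)|$. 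Since $\xi$ and $\eta$ take values in $\{\pm1\}$, $\Cov(\xi,\eta)$ is a combination of four event covariances, so it is at most $4\alpha_s(d)$. Alternatively, one can argue directly with the layer-cake representation as in \eqref{E:MN}, applied to positive and negative parts, which gives a constant such as $4$ or $16$. Either constant suffices.

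\textbf{Step 2 (truncation).} Fix a level $K>0$ and write $F=F_K+F_K'$ with $F_K=F\one_{\{|F|\le K\}}$, and decompose $G=G_K+G_K'$ in the same way. Then
\[
\Cov(F,G)=\Cov(F_K,G_K)+\Cov(F_K',G)+\Cov(F_K,G_K').
\]
The first term is at most $4K^2\alpha_s(d)$ by Step~1. For the tails, H\"older and Markov give
\[
\|F_K'\|_2\le\|F\|_4\,\Prob(|F|>K)^{1/4}\le\|F\|_4^2/K.
\]
Hence $|\Cov(F_K',G)|\le 2\|F_K'\|_2\|G\|_2\le 2\|F\|_4^2\|G\|_4/K$. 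The third term is bounded symmetrically, using $\|F_K\|_2\le\|F\|_4$.

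\textbf{Step 3 (optimize $K$).} By homogeneity we may normalize $\|F\|_4=\|G\|_4=1$, and the trivial case of a zero norm can be discarded. The bound then reads $4K^2\alpha+4/K$. Choosing $K=\alpha^{-1/3}$ gives $|\Cov(F,G)|\le 8\,\alpha_s(d)^{1/3}\|F\|_4\|G\|_4$. By Proposition~\ref{P:airy1-alpha-mixing}, $\alpha_s(d)^{1/3}\le C_s^{1/3}\e^{-(c_s/3)d^3/s^2}$, and renaming the constants gives \eqref{E:airy1-mixing}.

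There is no real obstacle here. The only point to check is that the Borel index sets $I$ and $J$ generate sub-sigma-fields of the half-line sigma-fields used in Proposition~\ref{P:airy1-alpha-mixing}, which is immediate. One should also note that the constants in \eqref{E:airy1-mixing} differ from those in \eqref{E:airy1-alpha-mixing} by absolute factors, through the cube root and the factor $8$.
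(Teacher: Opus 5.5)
Your argument is correct, but it takes a more self-contained route than the paper. The paper does not prove anything here. It quotes Davydov's covariance inequality $|\Cov(X,Y)|\le 12\,\alpha^{1-1/p-1/q}\|X\|_p\|Y\|_q$ with $p=q=4$, which gives the factor $12\,\alpha_s(d)^{1/2}$, and then inserts Proposition~\ref{P:airy1-alpha-mixing}. You instead rederive a Davydov-type inequality from two ingredients: the bounded case $|\Cov(M,N)|\le 4K_1K_2\,\alpha_s(d)$, obtained via the double conditional-sign trick, and truncation. This makes the lemma independent of the citation, but it yields the weaker factor $8\,\alpha_s(d)^{1/3}$. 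The loss comes only from estimating the tail terms in $L^2$ by Cauchy--Schwarz. If you use H\"older with exponents $(4/3,4)$ instead, you get $\|F\one_{\{|F|>K\}}\|_{4/3}\le\|F\|_4\,\Prob(|F|>K)^{1/2}\le\|F\|_4^3/K^2$. After normalization the bound becomes $4K^2\alpha_s(d)+4/K^2$, and the choice $K=\alpha_s(d)^{-1/4}$ recovers Davydov's exponent $1/2$, with constant $8$. For the lemma as stated, and for its use in Proposition~\ref{P:mixing-window}, the exponent is immaterial: any fixed positive power of $C_s\e^{-c_sd^3/s^2}$ has the same form after renaming constants. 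Two small details remain. Set the sign of $0$ equal to $1$, so that $\xi,\eta$ genuinely take values in $\{\pm1\}$. If $\alpha_s(d)=0$, either let $K\to\infty$ or choose $K$ from the upper bound $C_s\e^{-c_sd^3/s^2}$ rather than from $\alpha_s(d)$ itself.
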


\begin{proof}
  According to Davydov's covariance inequality (see
  \cite{davydov:68:convergence}), if $X\in L^p(\mathcal G)$, $Y\in L^q(\mathcal
  H)$, and $p^{-1}+q^{-1}<1$, then
  \[
    |\Cov(X,Y)|
    \le
    12\alpha(\mathcal G,\mathcal H)^{1-p^{-1}-q^{-1}}
    \|X\|_p\|Y\|_q,
  \]
  where 
  \begin{align*}
  \alpha(\mathcal G,\mathcal H)=\sup_{A\in \mathcal G, B\in \mathcal H}|\Prob(A\cap B)-\Prob(A)\Prob(B)|.
  \end{align*}
    Take $p=q=4$ and apply Proposition~\ref{P:airy1-alpha-mixing} to obtain
    \eqref{E:airy1-mixing}.
\end{proof}

Recall the truncated random variable $H_r(t,s,x)$ defined in
\eqref{E:truncated}.

\begin{proposition}\label{P:mixing-window}
  Fix $\eta\in(0,1)$. Then there are constants $C,c>0$, depending on $s,t,\eta$
  such that for $|x|\ge1$,
  \begin{equation}
    |\Cov(H_{\eta|x|}(t,s, x),\h(s,0))|
    \le C\exp\{-c |x|^3\}.
    \label{E:mixing-window-bound}
  \end{equation}
\end{proposition}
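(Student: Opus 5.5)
We condition on the future landscape and reduce the bound to the $L^4$ covariance estimate of Lemma~\ref{L:alpha-to-l4}. Write $\mathcal F_{\Lcal}\coloneqq\sigma(\Lcal(y,s;x',t):y,x'\in\R)$, which is independent of $\sigma(\h(s,y):y\in\R)$. Assume first $x\ge1$; the case $x\le-1$ is symmetric, using the mixing estimate with the roles of the two half-lines exchanged. Put $H=H_{\eta|x|}(t,s,x)$ and $G=\h(s,0)$.

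By the law of total covariance,
\[
  \Cov(H,G)=\E\bigl[\Cov(H,G\mid\mathcal F_{\Lcal})\bigr]+\Cov\bigl(\E[H\mid\mathcal F_{\Lcal}],\E[G\mid\mathcal F_{\Lcal}]\bigr).
\]
Since $G$ is independent of $\mathcal F_{\Lcal}$, the conditional expectation $\E[G\mid\mathcal F_{\Lcal}]=\E G$ is constant, so the second term vanishes.

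For the first term we realize the pair $(\h(s,\cdot),\Lcal(\cdot,s;\cdot,t))$ on a product space. For each fixed realization $\ell$ of the landscape, the function
\[
  \Phi_\ell(\h(s,\cdot))=\sup_{|y-x|\le\eta x}\{\h(s,y)+\ell(y,x)\}
\]
is measurable with respect to $\sigma(\h(s,y):y\in[(1-\eta)x,(1+\eta)x])$. Because the Airy$_1$ sample paths are continuous and the directed landscape is continuous, the supremum can be taken over rationals, which gives measurability. Apply Lemma~\ref{L:alpha-to-l4} with $a=0$, $I=\{0\}$, $J=[(1-\eta)x,\infty)$ and $d=(1-\eta)x$:
\[
  \bigl|\Cov(\Phi_\ell,G)\bigr|\le C_s\e^{-c_s(1-\eta)^3x^3/s^2}\,\|\Phi_\ell\|_4\,\|G\|_4 .
\]
Integrating over $\ell$ and using Hölder,
\[
  \E\bigl[\|\Phi_{\Lcal}\|_4\bigr]\le\bigl(\E\|\Phi_{\Lcal}\|_4^4\bigr)^{1/4}=\|H\|_4 .
\]
Lemma~\ref{L:truncated-moments} bounds $\|H\|_4$ uniformly in $|x|\ge1$. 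Since $\|G\|_4<\infty$ by the GOE Tracy--Widom moments, we obtain the claim with $c=c_s(1-\eta)^3/s^2$.

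The main technical point is to justify conditioning on $\mathcal F_{\Lcal}$, i.e.\ that $\Cov(H,G\mid\mathcal F_{\Lcal})(\ell)=\Cov(\Phi_\ell,G)$ for almost every $\ell$. This follows from Fubini on the product space, because $\h(s,\cdot)$ and the future landscape increment are independent. Joint measurability of $(\ell,h)\mapsto\Phi_\ell(h)$ in the path space follows from the rational-supremum representation.

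One further check: $\Phi_\ell\in L^4$ for almost every $\ell$. This is guaranteed because $\E\|\Phi_{\Lcal}\|_4^4=\|H\|_4^4<\infty$. Lemma~\ref{L:alpha-to-l4} is then applied only to those $\ell$, and the remaining null set of $\ell$ does not affect the expectation.
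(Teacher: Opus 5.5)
Your proof is correct and follows the paper's argument essentially step for step. Like the paper, you condition on the independent future landscape, apply Lemma~\ref{L:alpha-to-l4} for each fixed realization with separation $(1-\eta)|x|$, and then use H\"older and Lemma~\ref{L:truncated-moments} to integrate out the landscape. The only (valid) difference is how you get $\Phi_\ell\in L^4$ for almost every $\ell$: you read it off from Tonelli and $\|H\|_4<\infty$, whereas the paper proves finite moments for each realization using continuity of $\Lcal$ and tail bounds for $\max|\Acal_1|$ on compact intervals, so your route is slightly more economical.
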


\begin{proof} 
  First suppose that $x\geq1$. Since the sigma-fields
  $\sigma(\Lcal(y,s;x',t):x',y\in\R)$ and $\sigma(\h(s,y):y\in\R)$ are
  independent, for each realization $\omega'$ of the future landscape set
  \[
    X(\omega,\omega')
    \coloneqq\sup_{|y-x|\leq\eta|x|}
      \{\h(s,y)(\omega)+\Lcal(y,s;x,t)(\omega')\}.
  \]
  The moment estimates below justify Fubini's theorem, which gives
  \begin{align*}
    &\Cov(H_{\eta|x|}(t,s,x),\h(s,0))\\
    &\quad=\E_{\omega'}\E_{\omega}\!\left[
      X(\omega,\omega')
      \bigl(\h(s,0)(\omega)-\E[\h(s,0)]\bigr)\right].
  \end{align*}
  Here $\E_\omega$ and $\E_{\omega'}$ denote the expectations with respect to
  the random elements $\omega$ and $\omega'$, respectively. For each $\omega'$,
  the random variable $\omega\mapsto X(\omega,\omega')$ is measurable with
  respect to the sigma-field $\sigma(\h(s,y): y\geq
  (1-\eta)x)$.  In addition, the random variable $\omega\mapsto X(\omega,
  \omega')$ has finite moments.  To see this, using the fact that the directed
  landscape is continuous, we have 
  \begin{align*}
  |X(\omega, \omega')| \leq \sup_{|y-x|\leq \eta|x|} |\h(s,y)|(\omega) + \tilde{C} 
  \end{align*}
  for a positive constant $\tilde{C}$ depending on $\omega'$. Thus, we only need
  to show that the maximum of the absolute value of the Airy$_1$ process over a
  compact interval has finite moments. To simplify the notation, let us show
  that $\max_{x\in[0, 1]}|\mathcal{A}_1(x)|$ has finite moments.  Indeed, we
  have
  \begin{align*}
  \max_{x\in[0, 1]}|\mathcal{A}_1(x)| \leq \left|\max_{x\in[0, 1]}\mathcal{A}_1(x)\right|+ \left|\min_{x\in[0, 1]}\mathcal{A}_1(x)\right|.
  \end{align*}
  Using the estimates on the tail probabilities of $\max_{x\in[0,
  1]}\mathcal{A}_1(x)$ and $\min_{x\in[0, 1]}\mathcal{A}_1(x)$ (see
  \cite[Proposition 6.1]{pu:23:ergodicity} and \cite[Lemma
  3.1]{bhattacharjee.pu:25:macroscopic}), we conclude that $\max_{x\in[0,
  1]}|\mathcal{A}_1(x)|$ has finite moments. By stationarity and a finite cover
  by unit intervals, the same conclusion holds on every fixed compact interval.
  The preceding bound then shows that, for each realization of
  $\Lcal(y,s;x,t)$, the random variable $\omega\mapsto X(\omega,\omega')$ has
  finite moments with respect to $\E_\omega$.

  Hence, for each realization of $\Lcal(y,s;x,t)$, we apply
  Lemma~\ref{L:alpha-to-l4} to obtain
  \begin{align}
  &\left|\Cov(H_{\eta|x|}(t,s, x),\h(s,0))\right| \nonumber\\
  &\qquad \leq \E_{\omega'}\left[\left|\E_{\omega}\left[X(\omega, \omega') (\h(s,0)(\omega)-\E[\h(s,0)])\right]\right| \right] \nonumber\\
  &\qquad = \E_{\omega'} \left[\left|\Cov_{\omega} (X(\omega, \omega'), \h(s,0)(\omega))\right|\right]\nonumber\\
  &\qquad \leq C\e^{-c(1-\eta)^3|x|^3} \|\h(s,0)\|_4 \E_{\omega'}\left[
  \left\{\E_{\omega}\left[\left|X(\omega, \omega')\right|^4\right]\right\}^{1/4}
  \right]. \label{E:cov1}
  \end{align}
  By H\"older's inequality,
  \begin{align}
  \E_{\omega'}\left[
  \left\{\E_{\omega}\left[\left|X(\omega,\omega')\right|^4\right]\right\}^{1/4}
  \right] & \leq  \left\{\E_{\omega'}\left[
  \E_{\omega}\left[\left|X(\omega, \omega')\right|^4\right]
  \right] \right\}^{1/4} \nonumber\\
  &= \|H_{\eta|x|}(t,s, x)\|_4. \label{E:cov2}
  \end{align}
           
  Combining \eqref{E:cov1}, \eqref{E:cov2}, and
  Lemma~\ref{L:truncated-moments} proves \eqref{E:mixing-window-bound} for
  $x\ge1$. If $x\le-1$, then $|y-x|\le\eta|x|$ implies
  $y\le-(1-\eta)|x|$. The same argument applies with the left and right
  half-lines interchanged in Lemma~\ref{L:alpha-to-l4}. This proves the result
  for both signs of $x$.
\end{proof}

In view of \eqref{E:cov12} and Proposition \ref{P:mixing-window}, it remains to
estimate 
\[
  \Cov(\h(t,x)- H_{\eta |x|}(t,s,x), \h(s,0)).
\]
By Cauchy-Schwarz inequality, 
\begin{align*}
  & |\Cov(\h(t,x)- H_{\eta |x|}(t,s,x), \h(s,0))| \\
  & \qquad \qquad\leq \left\|\h(t,x)- H_{\eta |x|}(t,s,x)\right\|_2 \|\h(s,0)\|_2.
\end{align*}
Now we need to handle the truncation error $\left\|\h(t,x)- H_{\eta
|x|}(t,s,x)\right\|_2$. Suppose that supremum in \eqref{E:variational} is
attained at a random point $Y_x$. Then we can write
\begin{align}\label{E:optimizer}
&\left\|\h(t,x)- H_{\eta |x|}(t,s,x)\right\|_2 \nonumber \\
&\qquad=\left\|(\h(t,x)- H_{\eta |x|}(t,s,x)) \one_{\{|Y_x-x| >\eta|x|\}}\right\|_2\nonumber\\
&\qquad \leq \left\|(\h(t,x)- H_{\eta |x|}(t,s,x))\right\|_4 \left\|\one_{\{|Y_x-x| >\eta|x|\}}\right\|_4\nonumber\\
&\qquad \leq \left(\|\h(t,0)\|_4+\|H_{\eta|x|}(t,s,x)\|_4\right)
\Prob(|Y_x-x|>\eta|x|)^{1/4}.
\end{align}
Thus, by Lemma \ref{L:truncated-moments}, the remaining task is to estimate the
probability $\Prob(|Y_x-x| >\eta|x|)$, which will be done in the next section. 

\section{Optimizer localization}
\label{S:localization}

The second quantitative input is an optimizer localization bound for the
time-$s$ variational maximizer in \eqref{E:variational}. We begin with a
uniform localization estimate over a compact family of endpoint pairs.

\begin{proposition}\label{L:compact-family-split-localization}
  Fix $0<s<t$ and put $\rho=s/t$ and $m_{s,t}(z,x)=(1-\rho)z+\rho x$.  Fix an
  endpoint $x_0\in\R$, and let $Y^-_{z,x_0}\le Y^+_{z,x_0}$ be the leftmost and
  rightmost maximizers of
  \[
    y\mapsto \Lcal(z,0;y,s)+\Lcal(y,s;x_0,t).
  \]
  For all $R,U\ge1$,
  \begin{align}
    &\Prob\Bigg(
      \begin{aligned}
      &\{\exists |z-x_0|\le R:
        Y^+_{z,x_0}-m_{s,t}(z,x_0)>U\}\\
      &{}\cup\{\exists |z-x_0|\le R:
        Y^-_{z,x_0}-m_{s,t}(z,x_0)<-U\}
      \end{aligned}
    \Bigg)\notag\\
    &\qquad \le
    C\Big(1+\frac{R}{U}\Big)^2\exp\{-cU^3/t^2\},
    \label{E:compact-family-split-localization}
  \end{align}
  where $C,c$ depend only on $s,t$.
\end{proposition}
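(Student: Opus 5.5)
The plan is to reduce the uniform-in-$z$ statement to a single-endpoint-pair estimate at finitely many grid points, using monotonicity (planar ordering) of optimizers in the starting point $z$.

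\textbf{Step 1: a fixed-pair bound.} For fixed $z,x_0$, the point $(Y,s)$ with $Y$ a maximizer of $y\mapsto \Lcal(z,0;y,s)+\Lcal(y,s;x_0,t)$ is the location at time $s$ of a geodesic from $(z,0)$ to $(x_0,t)$. By shear invariance of the directed landscape, we may move the geodesic so that it runs from $(0,0)$ to $(0,t)$; the shear maps $m_{s,t}(z,x_0)$ to $0$. By $1:2:3$ scaling we may further take $t=1$, at which point the time-$s$ location is a fixed fraction $\rho$ of the way along. I would then invoke the standard transversal fluctuation tail for directed-landscape geodesics (Dauvergne--Ortmann--Vir\'ag): the geodesic at intermediate time $\rho$ deviates by more than $U$ with probability at most $C\exp\{-cU^3\}$. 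Because that bound is stated for all geodesics (hence for both $Y^\pm$), undoing the scaling yields
\[
\Prob\bigl(Y^+_{z,x_0}-m_{s,t}(z,x_0)>U\bigr)\le C\exp\{-cU^3/t^2\},
\]
with $C,c$ depending on $\rho$, and likewise for $Y^-$ on the other side. The exponent $U^3/t^2$ follows from the scaling $y\sim t^{2/3}$.

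\textbf{Step 2: monotonicity in $z$.} Planar ordering (geodesics with ordered endpoints can be chosen ordered, and the rightmost and leftmost geodesics are monotone in their endpoints) gives that $z\mapsto Y^+_{z,x_0}$ and $z\mapsto Y^-_{z,x_0}$ are nondecreasing almost surely. Place a grid $z_k=x_0-R+k\delta$ with $\delta=U/(2(1-\rho))$ and $k=0,\dots,K$, where $K\le C(1+R/U)$. For $z\in[z_k,z_{k+1}]$ we have $Y^+_{z,x_0}\le Y^+_{z_{k+1},x_0}$, and
\[
m_{s,t}(z,x_0)\ge m_{s,t}(z_{k+1},x_0)-(1-\rho)\delta = m_{s,t}(z_{k+1},x_0)-U/2.
\]
Hence the event $\{Y^+_z-m(z)>U\}$ forces $\{Y^+_{z_{k+1}}-m(z_{k+1})>U/2\}$. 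The same argument with $z_k$ handles $Y^-$. A union bound over the grid then gives $C(1+R/U)\exp\{-cU^3/(8t^2)\}$, which is within the claimed $C(1+R/U)^2\exp\{-cU^3/t^2\}$ after adjusting $c$. The square is slack, which suggests the authors may instead have used a two-parameter grid, for example also discretizing $x_0$ or the $y$-window. Since the statement is an upper bound, our linear factor suffices.

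\textbf{Main obstacle.} The main obstacle is Step 1: locating a citable transversal-fluctuation tail at an intermediate time with cubic exponent that holds uniformly for the extreme (leftmost and rightmost) maximizers, and checking how it combines with shear invariance. If only a point-to-point result at the midpoint is available, I would first try to derive the general-$\rho$ case from the global bound $\Prob(\sup_{r}|\pi(r)-\text{line}|>U)\le Ce^{-cU^3}$ for geodesics with endpoints in compact sets (DOV, Proposition 12.3-type estimates). Failing that, I would argue directly: on the event $Y^+>m+U$, the sum $\Lcal(z,0;Y,s)+\Lcal(Y,s;x_0,t)$ incurs a parabolic penalty of order $-U^2/(s(1-\rho))$. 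Combined with the Airy-sheet tail bound $|\Lcal(x,s;y,t)+(x-y)^2/(t-s)|\le C(t-s)^{1/3}\log^{2/3}(\cdots)$ uniformly, this penalty beats $\Lcal(z,0;x_0,t)$ except with probability $e^{-cU^3}$.
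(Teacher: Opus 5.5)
Your proposal is correct and follows the paper's argument. The paper also reduces the fixed-pair bound, via $1:2:3$ scaling and skew stationarity, to the argmax of a sum of two independent parabolic Airy processes, and it cites \cite[Lemma~9.5]{dauvergne.ortmann.ea:22:directed} for the cubic tail; this is the citable input you were looking for in Step~1. It then, as you do, uses monotonicity of $Y^\pm$ in the endpoints together with a grid and a union bound. The only difference is that the paper grids the two-dimensional box $[x_0-R,x_0+R]^2$, using monotonicity in both endpoint coordinates, and this is where the square $(1+R/U)^2$ comes from. Your one-dimensional grid in $z$ with mesh $U/(2(1-\rho))$ is valid and gives the sharper linear prefactor $(1+R/U)$.
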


\begin{proof}
  Fix a deterministic endpoint pair $(z,x_0)$ first.  By \cite[Lemma
  10.8]{dauvergne.ortmann.ea:22:directed}, the profile
  \[
    f_{z,x_0}(y) \coloneqq \Lcal(z,0;y,s)+\Lcal(y,s;x_0,t)
  \]
  attains its maximum, and $Y^-_{z,x_0},Y^+_{z,x_0}$ are the leftmost and
  rightmost maximizers. Using the scaling property of directed landscape (see
  \cite[Lemma 10.2(5)]{dauvergne.ortmann.ea:22:directed}), we write
  \begin{align*}
    f_{z,x_0}(y)=t^{1/3}\left(\Lcal(t^{-2/3}z,0;t^{-2/3}y,\rho)+\Lcal(t^{-2/3}y,\rho; t^{-2/3}x_0,1)\right).
  \end{align*}
  Denote $m=m_{s,t}(z,x_0)$ and introduce the variable $u$ defined by
  \[
    u=t^{-2/3}\bigl(y-m\bigr).
  \]
  It follows that
  \begin{align*}
   & f_{z,x_0}(m+t^{2/3}u)=t^{1/3}\big(\Lcal(t^{-2/3}z,0;t^{-2/3}m+u,\rho) \\
   & \qquad \qquad\qquad\qquad \qquad\qquad +\Lcal(t^{-2/3}m+u,\rho; t^{-2/3}x_0,1)\big).
  \end{align*}
  By the spatial stationarity of $\Lcal$, 
  \begin{align*}
    \Lcal(t^{-2/3}z,0;t^{-2/3}m+u,\rho) \overset{d}{=} \Lcal(0,0;t^{-2/3}(m-z)+u,\rho)
  \end{align*}
  and 
  \begin{align*}
    & \Lcal(t^{-2/3}m+u,\rho; t^{-2/3}x_0,1) \\
    & \qquad\overset{d}{=} \Lcal(-t^{-2/3}(m-x_0)\frac{\rho}{1-\rho}+u,\rho; \frac{-1}{1-\rho}t^{-2/3}(m-x_0),1).
  \end{align*}
  Applying the skew stationarity of $\Lcal$ (choosing $c=-t^{-2/3}(m-z)/\rho$
  in \cite[Lemma 10.2(4)]{dauvergne.ortmann.ea:22:directed}), we see that
  \begin{align}\label{E:law1}
    & \Lcal(0,0;t^{-2/3}(m-z)+u,\rho) \nonumber \\
    & \qquad\overset{d}{=} \Lcal(0,0;u,\rho)+ \rho^{-1}\left((t^{-2/3}(m-z)+u)^2-u^2\right).
  \end{align}
  Similarly, using again skew stationarity of $\Lcal$ (with
  $c=t^{-2/3}(m-x_0)/(1-\rho)$ in \cite[Lemma
  10.2(4)]{dauvergne.ortmann.ea:22:directed}), we deduce that
  \begin{align}\label{E:law2}
    & \Lcal(-t^{-2/3}(m-x_0)\frac{\rho}{1-\rho}+u,\rho; \frac{-1}{1-\rho}t^{-2/3}(m-x_0),1) \nonumber \\
    & \quad\qquad\quad \overset{d}{=} \Lcal(u, \rho; 0,1)+\frac{1}{1-\rho}\left((t^{-2/3}(m-x_0)+u)^2- u^2\right).
  \end{align}
  Since the directed landscape increments are independent on disjoint time
  intervals, we conclude from \eqref{E:law1} and \eqref{E:law2} that
  \begin{align*}
    t^{-1/3}f_{z,x_0}(m+t^{2/3}u)
    & \overset{d}{=} \Lcal(0,0;u,\rho)+\Lcal(u, \rho; 0,1)             \\
    & \qquad + \rho^{-1}\left((t^{-2/3}(m-z)+u)^2-u^2\right)           \\
    & \qquad + \frac{1}{1-\rho}\left((t^{-2/3}(m-x_0)+u)^2- u^2\right) \\
    & =\Lcal(0,0;u,\rho)+\Lcal(u, \rho; 0,1)                           \\
    & \qquad + \rho^{-1}t^{-4/3}(m-z)^2\\
    & \qquad + (1-\rho)^{-1}t^{-4/3}(m-x_0)^2,
  \end{align*}
  where the second equality holds by the definition of $m=m_{s,t}(z,x_0)$.
  The positive factor $t^{1/3}$ and the final additive term, which is independent
  of $u$, do not affect the maximizers.

  The two increments $\Lcal(0,0;u,\rho)$ and $\Lcal(u, \rho; 0,1)$ are
  independent by \cite[Definition~10.1(II)]{dauvergne.ortmann.ea:22:directed}.
  By the Airy sheet marginal property in \cite[Definition
  10.1(I)]{dauvergne.ortmann.ea:22:directed}, the first term,
  $\Lcal(0,0;u,\rho)$, as a function of $u$, is a parabolic Airy process of
  scale $\rho^{1/3}$; by flip symmetry
  \cite[Lemma~10.2(3)]{dauvergne.ortmann.ea:22:directed}, the second term has
  the law of an independent parabolic Airy process of scale $(1-\rho)^{1/3}$.
  This is exactly the two-Airy-process object in
  \cite[Lemma~9.5]{dauvergne.ortmann.ea:22:directed}.  If $\sigma_\rho =
  \min\left\{\rho^{1/3}, (1-\rho)^{1/3}\right\}$ and $S^\pm_\rho$ denotes the
  leftmost/rightmost maximizer of this normalized sum, then
  $Y^\pm_{z,x_0}-m_{s,t}(z,x_0)$ has the same law as $t^{2/3}S^\pm_\rho$.

  Lemma~9.5 of \cite{dauvergne.ortmann.ea:22:directed} therefore gives, for
  every $M>0$,
  \[
    \Prob\bigl(|S^\pm_{\rho}|>M\bigr)
    \le C\exp\{-cM^3/\sigma_\rho^6\}
  \]
  with universal $C,c$ in that normalized statement. Since $\rho \in (0,1)$ is
  fixed, converting back to the original split point yields the fixed
  endpoint-pair estimate
  \begin{equation}
    \Prob\bigl(|Y^\pm_{z,x_0}-m_{s,t}(z,x_0)|>U\bigr)
    \le C\exp\{-cU^3/t^2\},
    \label{E:fixed-pair-split-tail}
  \end{equation}
  where $C,c$ depend only on the fixed $s,t$.

  It remains to make \eqref{E:fixed-pair-split-tail} uniform over $|z-x_0| \le
  R$. We embed the endpoint segment in a rectangle and apply the
  compact-box-to-grid reduction used in the proof of
  \cite[Lemma~12.2]{dauvergne.ortmann.ea:22:directed}, with the
  northeast/southwest comparisons justified by the monotonicity of the
  leftmost/rightmost maximizers in
  \cite[Proposition~9.2]{dauvergne.ortmann.ea:22:directed}.  Put
  \[
    K_R \coloneqq [x_0-R,x_0+R]\times[x_0-R,x_0+R],
  \]
  so the relevant segment $\{(z,x_0):|z-x_0|\le R\}$ lies inside $K_R$.  Let the
  grid mesh be
  \[
    h \coloneqq \frac{U}{2},
  \]
  and choose the finite grid
  \[
    \Gamma \coloneqq h\mathbb{Z}^2\cap
    \{(x,y): d((x,y),K_R)\le \sqrt{2}h\}.
  \]
  Then $\#\Gamma\le C(1+R/U)^2$.  For $(x,y)\in K_R$, let
  $(x^{\nearrow},y^{\nearrow})$ be the northeast grid point of the cell
  containing $(x,y)$ and let $(x^{\swarrow},y^{\swarrow})$ be the southwest one.
  Because the rightmost and leftmost maximizers are nondecreasing in each
  endpoint coordinate by the metric-composition monotonicity in
  \cite[Proposition~9.2]{dauvergne.ortmann.ea:22:directed}, we have
  \[
    Y^+_{x,y}\le Y^+_{x^{\nearrow},y^{\nearrow}},
    \qquad
    Y^-_{x^{\swarrow},y^{\swarrow}}\le Y^-_{x,y}.
  \]
  Also, since $m_{s,t}(x,y)=(1-\rho)x+\rho y$ is affine with coefficients adding
  to $1$, moving from $(x,y)$ to either adjacent comparison point changes the
  center by at most one mesh step:
  \[
    |m_{s,t}(x^{\nearrow},y^{\nearrow})-m_{s,t}(x,y)|\le h=U/2,
  \]
  and similarly for the southwest point.  Hence an upper-tail failure
  $Y^+_{x,y}-m_{s,t}(x,y)>U$ forces
  \[
    Y^+_{x^{\nearrow},y^{\nearrow}}-m_{s,t}(x^{\nearrow},y^{\nearrow})>U/2,
  \]
  and a lower-tail failure $Y^-_{x,y}-m_{s,t}(x,y)<-U$ forces
  \[
    Y^-_{x^{\swarrow},y^{\swarrow}}-m_{s,t}(x^{\swarrow},y^{\swarrow})<-U/2.
  \]
  Applying the fixed-pair bound \eqref{E:fixed-pair-split-tail} with threshold
  $U/2$ at each grid point and summing over $\Gamma$ gives
  \begin{align*}
    &\Prob\Big(
    \exists (x,y)\in K_R:
    Y^+_{x,y}-m_{s,t}(x,y)>U
    \text{ or }
    Y^-_{x,y}-m_{s,t}(x,y)<-U
    \Big)\\
    &\qquad\qquad\qquad \qquad 
    \le C\Bigl(1+\frac{R}{U}\Bigr)^2 \e^{-cU^3/t^2}.
  \end{align*}
  Restricting from $K_R$ back to the segment $\{(z,x_0):|z-x_0|\le R\}$ proves
  \eqref{E:compact-family-split-localization}. This proves Proposition
  \ref{L:compact-family-split-localization}.
\end{proof}
  
In order to show that the supremum in the variational formula
\eqref{E:variational} is attained at a measurable random point, we need the
following lemma. 

\begin{lemma}\label{L:borel-leftmost-argmax}
  Let
  $
    \mathcal{C}_{\mathrm{coer}}
     \coloneqq \{f\in C(\R): f(y)\to-\infty \text{ as } |y|\to\infty\}.
  $
  For $f\in\mathcal{C}_{\mathrm{coer}}$, let
  \[
    \ell(f) \coloneqq \inf\{y\in\R: f(y)=\sup_{z\in\R} f(z)\}.
  \]
  Then $\ell(f)$ is well defined, and the map $\ell:\mathcal{C}_{\mathrm{coer}}\to\R$
  is Borel for the topology of locally uniform convergence.
\end{lemma}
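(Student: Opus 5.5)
To prove Lemma~\ref{L:borel-leftmost-argmax}, we first show that $\ell$ is well defined and then that it is Borel. For well-definedness, fix $f\in\mathcal{C}_{\mathrm{coer}}$ and put $M=\sup f$. Coercivity gives an $R$ with $f<f(0)$ outside $[-R,R]$. Hence $M=\max_{[-R,R]}f$ is finite and attained. The argmax set $\{f=M\}$ is then nonempty, closed by continuity, and contained in $[-R,R]$. So its infimum is finite and attained, and $\ell(f)$ is a genuine maximizer.

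For measurability, we will show that each sublevel set $\{\ell\le a\}$, $a\in\R$, is a Borel subset of $\mathcal{C}_{\mathrm{coer}}$, with the subspace topology from $C(\R)$. The key identity is
\[
  \{\ell(f)\le a\}=\Bigl\{\sup_{y\le a}f(y)\ge \sup_{y\in\R}f(y)\Bigr\}.
\]
To see it, suppose $\ell(f)\le a$. Then the maximizer $\ell(f)$ lies in $(-\infty,a]$, so the sup over $y\le a$ equals $M$. Conversely, coercivity makes $\sup_{y\le a}f$ attained on a compact subset of $(-\infty,a]$. If this sup is at least $M$, it is attained at some point $y_0\le a$ with $f(y_0)=M$, which gives $\ell(f)\le y_0\le a$.

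It remains to check that $f\mapsto\sup_{y\le a}f(y)$ and $f\mapsto\sup_\R f$ are Borel. By continuity of $f$, each equals a supremum over a countable dense set of rationals (in $(-\infty,a]$, respectively in $\R$). Each evaluation map $f\mapsto f(q)$ is continuous for locally uniform convergence. A countable supremum of continuous maps into $(-\infty,\infty]$ is Borel, and coercivity makes these values finite anyway. The set where one Borel function is at least another is Borel, so $\{\ell\le a\}$ is Borel. Since the sets $\{\ell\le a\}$ generate the Borel $\sigma$-field of $\R$, $\ell$ is Borel.

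The main point needing care is the converse inclusion, namely that the supremum over $(-\infty,a]$ is attained. This uses coercivity, because a non-attained supremum could equal $M$ without any maximizer lying at or below $a$. A secondary subtlety is that $\mathcal{C}_{\mathrm{coer}}$ is not closed in $C(\R)$. Working with relative Borel sets handles this, and it suffices for the application: the process $y\mapsto \h(s,y)+\Lcal(y,s;x,t)$ is almost surely in $\mathcal{C}_{\mathrm{coer}}$ by the parabolic decay of $\Lcal$ and the sublinear growth of the flat profile. It then yields the measurable optimizer $Y_x=\ell(\cdot)$ used in \eqref{E:optimizer}.
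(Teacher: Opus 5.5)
Your proof is correct and follows essentially the same route as the paper. Both arguments describe the sublevel sets of $\ell$ by comparing the supremum of $f$ over a half-line with its global supremum, use continuity to reduce both suprema to countable suprema of evaluation maps, and use coercivity to ensure the half-line supremum is attained. The only difference is cosmetic: you handle $\{\ell\le a\}$ directly via the inequality $\sup_{y\le a}f\ge\sup_{\R}f$, whereas the paper writes $\{\ell<a\}$ as a countable union over rational $b<a$ of the equality events $\{\sup_{q\in\mathbb{Q},\,q\le b}f(q)=\sup_{q\in\mathbb{Q}}f(q)\}$.
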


\begin{proof}
  Since $f$ is continuous and coercive, its argmax set is nonempty and compact,
  so $\ell(f)$ is well defined. For any $a \in \R$,
  \[
    \{\ell(f)<a\}
    =
    \bigcup_{\substack{b\in\mathbb{Q}\\ b<a}}
    \Bigl\{\sup_{q\in\mathbb{Q},\ q\le b} f(q)
    =\sup_{q\in\mathbb{Q}} f(q)\Bigr\}.
  \]
  Indeed, if $\ell(f)<a$, then choose rational $b$ with $\ell(f)<b<a$; the
  maximum is attained on $(-\infty,b]$, so the corresponding suprema agree.
  Conversely, if the suprema agree for some rational $b<a$, continuity and
  coercivity force a maximizer in $(-\infty,b]$, and hence $\ell(f)\le b<a$.
  Both sides are determined by countable suprema of evaluation maps, so $\ell$
  is Borel.  This proves Lemma~\ref{L:borel-leftmost-argmax}.
\end{proof}

\begin{proposition}\label{P:flat-to-point-geodesic-localization}
  Fix $0<s<t$ and $\eta \in (0,1)$. One can choose a measurable time-$s$
  optimizer $Y_x$ in \eqref{E:variational} such that, for $|x| \ge 1$,
  \begin{equation}
    \Prob(|Y_x-x|>\eta |x|)
    \le C\exp\{-c |x|^3\}.
    \label{E:proved-geodesic-localization}
  \end{equation}
\end{proposition}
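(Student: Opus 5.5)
We write the flat KPZ fixed point through the directed landscape as
$\h(s,y)=\sup_z\Lcal(z,0;y,s)$ and $\h(t,x)=\sup_z\sup_y\{\Lcal(z,0;y,s)+\Lcal(y,s;x,t)\}$, using \eqref{E:var2} and metric composition. The time-$s$ optimizer in \eqref{E:variational} is then the intermediate point of a geodesic from some starting point $z$ to $(x,t)$. The plan is to localize the starting point $z$ first, and then the intermediate point $y$ given $z$, using Proposition~\ref{L:compact-family-split-localization}.

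\emph{Measurability.} The map $y\mapsto \h(s,y)+\Lcal(y,s;x,t)$ is continuous. It tends to $-\infty$ almost surely: $\h(s,\cdot)$ grows at most like $C(1+|y|)^{1/2+}$, while $\Lcal(y,s;x,t)\le -(y-x)^2/(t-s)+C(1+|y|)^{1/2+}$ by the landscape shape bounds. We therefore take $Y_x$ to be the leftmost maximizer, which is a random variable by Lemma~\ref{L:borel-leftmost-argmax}. By spatial stationarity it is enough to bound $\Prob(Y_x-x>\eta|x|)$ and $\Prob(Y_x-x<-\eta|x|)$. The two sides are symmetric, so we discuss only one of them.

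\emph{Localizing the starting point.} Let $Z$ be a maximizer of $z\mapsto \Lcal(z,0;x,t)$, a parabolic Airy process of scale $t^{1/3}$ centered at $x$. Standard Airy tail bounds (parabolic decay against Tracy--Widom tails, as in \cite[Lemma~9.5]{dauvergne.ortmann.ea:22:directed} or the landscape shape theorem) give
\[
\Prob(|Z-x|>R)\le C\e^{-cR^3/t^2}.
\]
We take $R=\eta|x|/2$. Any maximizer $Y$ of the variational problem together with some argmax $z$ of $\Lcal(\cdot,0;Y,s)$ satisfies $\Lcal(z,0;Y,s)+\Lcal(Y,s;x,t)=\sup_{z'}\Lcal(z',0;x,t)$. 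Hence $z$ maximizes $\Lcal(\cdot,0;x,t)$, and $Y$ is a maximizer of $f_{z,x}$ from Proposition~\ref{L:compact-family-split-localization}. Consequently $Y^-_{z,x}\le Y_x\le Y^+_{z,x}$ for some $z$ with $|z-x|\le R$, outside an event of probability at most $C\e^{-c|x|^3}$. Since the set of maximizers of $\Lcal(\cdot,0;x,t)$ may not be a singleton, we bound over all of them via the event $\{\sup_{|z-x|>R}\Lcal(z,0;x,t)\ge \Lcal(x,0;x,t)\}$.

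\emph{Intermediate localization.} On this good event, $|m_{s,t}(z,x)-x|=(1-\rho)|z-x|\le R$. Hence $|Y_x-x|>\eta|x|$ forces $Y^+_{z,x}-m>\eta|x|/2$ or $Y^-_{z,x}-m<-\eta|x|/2$ for some $|z-x|\le R$. We apply Proposition~\ref{L:compact-family-split-localization} with $x_0=x$, $R=\eta|x|/2$ and $U=\eta|x|/2$. This bounds the probability by $4C\e^{-c\eta^3|x|^3/(8t^2)}$, and summing the two contributions gives \eqref{E:proved-geodesic-localization}.

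\emph{Main obstacle.} The main obstacle is the identification step: showing that every optimizer of the two-stage problem is sandwiched by $Y^\pm_{z,x}$ for an admissible $z$. A second difficulty is proving the starting-point tail uniformly over possibly non-unique argmaxes, which requires a clean event such as
\[
\sup_{|z-x|>R}\Lcal(z,0;x,t)<\Lcal(x,0;x,t).
\]
Its probability can be bounded using stationarity of $\Lcal(\cdot,0;x,t)+(\cdot-x)^2/t$ together with a union bound over unit intervals, and these tails are summable against the parabola.
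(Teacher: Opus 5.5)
Your proof is correct and follows essentially the same route as the paper. You localize the flat starting point $z$ within $O(|x|)$ of $x$ by an Airy tail bound, then use $|m_{s,t}(z,x)-x|=(1-\rho)|z-x|$ to reduce to Proposition~\ref{L:compact-family-split-localization} with $R,U\asymp|x|$. The one difference is in how the optimizer is selected: you take $Y_x$ to be the leftmost optimizer of \eqref{E:variational} and show that every optimizer lies between $Y^\pm_{z,x}$ for some flat argmax $z$, which is why you need the starting-point tail uniformly over all argmaxes. The paper instead defines $Y_x$ as the leftmost maximizer of $F_{Z_x,x}$, where $Z_x$ is the leftmost flat argmax, and checks afterwards via metric composition that this $Y_x$ is an optimizer.
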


\begin{proof}
  We start from the flat directed landscape representation in \eqref{E:var2}
  with $\h_0\equiv0$
  \[
    \h(t,x)=\sup_{z\in \R}\Lcal(z,0;x,t).
  \]
  Let $Z_x$ be the leftmost maximizer of $z\mapsto\Lcal(z,0;x,t)$, and write
  \[
    G_x(z) \coloneqq \Lcal(z,0;x,t).
  \]
  By the scaling and stationarity properties of the directed landscape
  \cite[Lemma~10.2]{dauvergne.ortmann.ea:22:directed},
  \[
    G_x(x+t^{2/3}q)\stackrel{d}=t^{1/3}\mathcal S(q,0),
  \]
  where $\mathcal S$ is the Airy sheet.  Lemma~5.3 of
  \cite{dauvergne.sarkar.ea:22:three-halves} yields there exist deterministic
  $c_0, d>0$ such that, uniformly in $q\in\R$,
  \begin{equation}
    \bigl|\mathcal S(q,0)+q^2\bigr|
    \le \mathfrak C+c_0\log^{2/3}(2+|q|),
    \label{E:airy-sheet-envelope-q0}
  \end{equation}
  where $\mathfrak C$ satisfies the tail bound
  \begin{align}\label{E:C}
  \Prob(\mathfrak C>m)\leq c_0\e^{-d m^{3/2}}.
  \end{align}
 In particular, $\mathcal S(q,0)\to-\infty$ as $|q|\to\infty$, so $G_x$ is
 almost surely continuous and coercive. Therefore, its argmax set is nonempty
 and compact.

  If $Z_x=x+t^{2/3}q_x$, then maximality of $Z_x$ against the candidate $q=0$
  gives $\mathcal S(q_x,0)\ge \mathcal S(0,0)$.  Using
  \eqref{E:airy-sheet-envelope-q0} at $q=q_x$ and at $q=0$ yields
  \[
    q_x^2
    \le 2\mathfrak C + 2c_0\log^{2/3}(2+|q_x|)+c_1
  \]
  for a deterministic constant $c_1$.  Choose $Q_0$ so large that
  $2c_0\log^{2/3}(2+u)\le u^2/2$ for $u\ge Q_0$.  Then on $\{|q_x|\ge Q_0\}$,
  \[
    \mathfrak C\ge c_2 q_x^2
  \]
  for a deterministic $c_2>0$.   Hence, we see from \eqref{E:C} that 
  \[
    \Prob(|q_x|>M)
    \le C\exp\{-c M^3\},
    \qquad M>0.
  \]
  Taking $M=\theta |x|/t^{2/3}$ proves that, for every fixed $\theta>0$,
  \begin{align}\label{E:Zx}
    \Prob(|Z_x-x|>\theta |x|)
    \le C\exp\{-c |x|^3\},
  \end{align}
  where the constant $c$ depends on $t$ and $\theta$.

  The preceding coercivity and joint continuity show that
  $G_x\in\mathcal C_{\mathrm{coer}}$. Hence
  Lemma~\ref{L:borel-leftmost-argmax} makes $Z_x=\ell(G_x)$ measurable.
  For $z\in\R$, set
  \[
    F_{z,x}(y)=\Lcal(z,0;y,s)+\Lcal(y,s;x,t).
  \]
  Lemma~10.8 and its proof in
  \cite{dauvergne.ortmann.ea:22:directed} give a single probability-one event on
  which, simultaneously for every $z\in\R$, the profile $F_{z,x}$ is continuous
  and coercive, attains its maximum, and has a nonempty compact argmax set. Thus
  the same conclusions hold after substituting the random endpoint $z=Z_x$.
  Joint continuity of $\Lcal$ and measurability of $Z_x$ make $F_{Z_x,x}$ a
  measurable $\mathcal C_{\mathrm{coer}}$-valued random profile. Define
  $Y_x=\ell(F_{Z_x,x})$. Lemma~\ref{L:borel-leftmost-argmax} then gives the
  measurability of $Y_x$.

  Choose $\theta>0$ so small that $(1-\rho)\theta<\eta/2$, with $\rho=s/t$. By a
  union bound, we write
  \begin{align}\label{E:P1}
  & \Prob(|Y_x-x|>\eta |x|) \nonumber                                                                     \\
  & \qquad\leq     \Prob(|Z_x-x|>\theta |x|) +   \Prob(|Y_x-x|>\eta |x|; |Z_x-x|\leq \theta |x|)\nonumber \\
  & \qquad \leq C\exp\{-c |x|^3\}+   \Prob(|Y_x-x|>\eta |x|; |Z_x-x|\leq \theta |x|)
  \end{align}
  thanks to \eqref{E:Zx}. Notice that the events $|Y_x-x|>\eta|x|$ and
  $|Z_x-x|\le\theta|x|$ imply that
  \begin{align*}
    |Y_x-m_{s,t}(Z_x,x)| \geq & |Y_x-x| -|x -m_{s,t}(Z_x,x)| \\
                         >    & \eta|x|-\eta|x|/2=\eta|x|/2,
  \end{align*}
  where the second inequality holds since
  \[
    |m_{s,t}(Z_x,x)-x|=(1-\rho)|Z_x-x|\le (1-\rho)\theta |x|< \eta |x|/2.
  \]
  Hence, we have
  \begin{align*}
    & \Prob(|Y_x-x|>\eta |x|; |Z_x-x|\leq \theta |x|) \\
    & \qquad \qquad \qquad \leq \Prob(  |Y_x-m_{s,t}(Z_x,x)|>\eta |x|/2; |Z_x-x|\leq \theta |x|).
  \end{align*}
   Let $X_0=X_0(s,t,\eta,\theta)$ be any threshold such that $R \coloneqq \theta
   |x|\ge1$ and $U \coloneqq \eta |x|/2\ge1$ when $|x|\ge X_0$. We claim that
  \begin{align*}
    &\left\{|Y_x-m_{s,t}(Z_x,x)|>U;\ |Z_x-x|\le R\right\}\\
    &\quad\subset
      \Big\{\exists |z-x|\le R:\
      Y_{z,x}^+-m_{s,t}(z,x)>U\\
    &\hspace{2.05in}\text{or }Y_{z,x}^--m_{s,t}(z,x)<-U\Big\},
  \end{align*}
  where $Y_{z,x}^+$ and $Y_{z,x}^-$ are the rightmost and leftmost maximizers in
  Proposition \ref{L:compact-family-split-localization}. To see this, we
  consider $A \coloneqq \{ |Y_x-m_{s,t}(Z_x,x)|>U; |Z_x-x|\leq R\}$ and take
  $\omega\in A$. Set $z=Z_x(\omega)$ and by the construction of $Y_x$ we have
  $Y_x(\omega)=Y_{z,x}^-(\omega)$. Then $\omega\in A$ implies that
  \begin{align*}
  |z-x|\leq R\quad \text{and} \quad |Y_{z,x}^-(\omega) -m_{s,t}(z,x)| >U.
  \end{align*}
  If $ Y_{z,x}^-(\omega) -m_{s,t}(z,x)<-U$, then since $|z-x|\leq R$, we have
  \begin{align*}
    \omega\in \left\{\exists |z'-x| \leq R: Y^-_{z',x}-m_{s,t}(z',x) <-U\right\}.
  \end{align*}
  If $ Y_{z,x}^-(\omega) -m_{s,t}(z,x)>U$, since $Y_{z,x}^+(\omega) \geq
  Y_{z,x}^-(\omega)$, it follows that $ Y_{z,x}^+(\omega) -m_{s,t}(z,x)>U$ and
  hence
     \begin{align*}
  \omega\in \{\exists |z'-x| \leq R: Y^+_{z',x}-m_{s,t}(z',x) >U\}.
  \end{align*}
  Thus, the claim is verified. Therefore,
  Proposition~\ref{L:compact-family-split-localization} gives, for all $|x| \geq
  X_0$ such that $\theta|x|, \eta|x|/2\ge1$,
  \begin{align}\label{E:P2}
    \Prob\left(|Y_x-m_{s,t}(Z_x,x)|>\eta |x|/2,\ |Z_x-x|\le\theta |x|\right)
    \le C e^{-c |x|^3}.
  \end{align}
  The preceding two displays \eqref{E:P1} and \eqref{E:P2} imply
  \eqref{E:proved-geodesic-localization}, after changing constants.  The
  remaining bounded range $1\le |x|\le X_0$ is absorbed by enlarging the
  prefactor $C$.

  Finally, $Y_x$ is a time-$s$ optimizer in the fixed-point variational formula
  \eqref{E:variational}. Indeed, metric composition gives
  \[
    \h(t,x)=\Lcal(Z_x,0;x,t)
    =\Lcal(Z_x,0;Y_x,s)+\Lcal(Y_x,s;x,t),
  \]
  and $\h(s,Y_x)=\sup_w\Lcal(w,0;Y_x,s)\ge\Lcal(Z_x,0;Y_x,s)$.  Comparing with
  the variational identity \eqref{E:variational} forces equality, so $Y_x$
  attains the supremum in \eqref{E:variational}. This proves
  Proposition~\ref{P:flat-to-point-geodesic-localization}.
\end{proof}

\section{Two-time spatial covariance decay}
\label{S:covariance-proof}

We now combine the mixing and localization estimates to prove the main
decorrelation theorem.

\begin{proof}[Proof of Theorem~\ref{T:main-decay}]
  We first treat the case $0<s<t$. Fix $\eta=\frac12$. We proceed as
  in~\eqref{E:cov12}:
  \begin{align}\label{E:covestimate}
    & \left|\Cov(\h(t,x), \h(s,0))\right| \nonumber                                                                                            \\
    & \quad\leq  \left|\Cov(\h(t,x)- H_{\eta |x|}(t,s,x), \h(s,0))\right| + \left|\Cov(H_{\eta|x|}(t,s,x), \h(s,0))\right| \nonumber \\
    & \quad \leq \|\h(t,x)- H_{\eta |x|}(t,s,x)\|_2\| \h(s,0)\|_2+ C\e^{-c |x|^3},
  \end{align}
  where the second inequality holds by the Cauchy-Schwarz inequality and
  Proposition \ref{P:mixing-window}. Let $Y_x$ be a measurable time-$s$
  optimizer in \eqref{E:variational} chosen in Proposition
  \ref{P:flat-to-point-geodesic-localization}. Then we argue as in
  \eqref{E:optimizer} to see that
  \begin{align*}
    & \left\|\h(t,x)- H_{\eta |x|}(t,s,x)\right\|_2                                                      \\
    & \qquad\qquad \leq \left(\|\h(t,0)\|_4+\|H_{\eta|x|}(t,s,x)\|_4\right) \Prob(|Y_x-x|>\eta|x|)^{1/4} \\
    & \qquad\qquad \leq C\e^{-c |x|^3},
  \end{align*}
  where we use Lemma \ref{L:truncated-moments} and Proposition
  \ref{P:flat-to-point-geodesic-localization} in the second inequality. This
  together with \eqref{E:covestimate} proves the assertion when $0<s<t$.

  If $s=t$, the claim follows from the fixed-time identification
  $\{\h(t,x):x\in\R\}\overset{d}{=} \{(2t)^{1/3}\Acal_1((2t)^{-2/3}x):x\in\R\}$
  in \cite[(4.15)]{matetski.quastel.ea:21:kpz}
  and the cubic-exponential covariance estimate for the Airy$_1$ process in
  \cite{basu.busani.ea:23:on}. Finally, if $s>t$, spatial stationarity and
  symmetry of covariance give
  \[
    \Cov(\h(t,x),\h(s,0))=\Cov(\h(s,-x),\h(t,0)),
  \]
  so the already proved case applies with the two times interchanged. This proves
  Theorem~\ref{T:main-decay}.
\end{proof}

\section{Spatial averages and Gaussian fluctuations}\label{S:fluctuations}

Theorem~\ref{T:main-decay} implies that the dynamic susceptibility $\chi$ in
\eqref{E:limit-covariance} is well defined. Joint spatial stationarity
\eqref{E:joint-spatial-stationarity} gives, for every $s,t>0$,
\begin{align}
  \frac1N\Cov(S_{N,t},S_{N,s})
  &=\int_{-N}^{N}\left(1-\frac{|x|}{N}\right)
    \Cov(\h(t,x),\h(s,0))\,\dd x\notag\\
  &\longrightarrow \chi(t,s).
  \label{E:Fejer-covariance}
\end{align}
The convergence follows from Theorem~\ref{T:main-decay} and dominated
convergence. Consequently, for $t_1,\ldots,t_k>0$ and $a\in\R^k$,
\[
  \sum_{i,j=1}^k a_i a_j\chi(t_i,t_j)
  =\lim_{N\to\infty}\frac1N
    \Var\!\left(\sum_{i=1}^k a_iS_{N,t_i}\right)\ge0,
\]
so $\chi$ is positive semidefinite. Spatial stationarity and covariance symmetry
also give $\chi(t,s)=\chi(s,t)$. Finally, the scaling relation
\eqref{E:chi-scaling} follows from the $1:2:3$ scaling of the flat KPZ fixed
point and the change of variables $x=\lambda^{2/3}y$.

We next prove the multi-time central limit theorem.

\begin{proof}[Proof of Theorem \ref{T:CLT}]
  The proof is similar to that of \cite[Theorem 1.1]{mueller.pu:25:spatial}. We
  include the details here for the reader's convenience. Denote
  \begin{align*}
    S_{N,t}=\int_0^N (\h(t,x)-\E[\h(t,0)]) \dd x.
  \end{align*}
  Fix $0<t_1<\ldots<t_k$. By the Cram\'{e}r--Wold theorem, we need to prove that for
  all $a_1, \ldots, a_k\in \R$, as $N\to\infty$
  \begin{align}\label{E:fdd}
    \frac{1}{\sqrt{N}}\sum_{\ell=1}^ka_\ell S_{N,t_\ell}\xrightarrow{\rm d} a_1\mathcal{G}(t_1)+
    \ldots + a_k\mathcal{G}(t_k).
  \end{align}
  Without loss of generality, we assume $N$ is an integer.

  We first show that \eqref{E:fdd} holds for all nonnegative $a_1,\ldots,a_k$.
  Fix $a_1$, $\ldots, a_k\in \R_+$. We write
  \begin{align*}
    \sum_{\ell=1}^k a_\ell S_{N, t_\ell}= \sum_{j=1}^NX_j,
  \end{align*}
  where
  \begin{align*}
    X_j=\int_{j-1}^j\sum_{\ell=1}^k
      a_\ell(\h(t_\ell,x)-\E[\h(t_\ell,0)])\,\dd x,\qquad j\in\Z.
  \end{align*}

    Joint spatial stationarity \eqref{E:joint-spatial-stationarity} implies that
  $(X_j)_{j\in\Z}$ is stationary. To prove association, let $u$ solve the
  stochastic heat equation \eqref{E:SHE} with $u(0)\equiv1$, and for $T>0$ set
  \begin{align*}
    H_T(t,x)=\frac{\log u(Tt,2^{1/3}T^{2/3}x)+Tt/24}
      {2^{-1/3}T^{1/3}},\qquad (t,x)\in(0,\infty)\times\R.
  \end{align*}
  The stochastic heat equation field $u$ is associated by
  \cite[Theorem~A.4]{chen.khoshnevisan.ea:23:central}; since the logarithm and
  the positive affine rescaling above are coordinatewise nondecreasing, $H_T$
  is associated as well. By \cite[Theorem~1.8]{wu:23:kpz}, $H_T$ converges in
  distribution to $\h$ in $C((0,\infty)\times\R)$. Association is preserved
  under weak convergence by (P$_5$) of
  \cite{esary.proschan.ea:67:association}, so the full field $\h$ is associated.
  For any finite collection of indices, approximate the corresponding $X_j$'s
  by common-mesh Riemann sums. Because the coefficients $a_\ell$ are
  nonnegative, these sums are coordinatewise nondecreasing functions of
  finitely many field values. Passing once more to the weak limit shows that
  $(X_j)_{j\in\Z}$ is associated.

  The variables $X_j$ are centered and square integrable. Association gives
  $\Cov(X_0,X_j)\ge0$, while Theorem~\ref{T:main-decay}, Fubini's theorem, and
  joint spatial stationarity give
  \begin{align*}
    \sum_{j\in\Z}\Cov(X_0,X_j)
    &=\sum_{\ell,m=1}^k a_\ell a_m
      \int_\R\Cov(\h(t_\ell,x),\h(t_m,0))\,\dd x\\
    &=\sum_{\ell,m=1}^k a_\ell a_m\chi(t_\ell,t_m)
    <\infty.
  \end{align*}
  Thus the hypotheses of \cite[Theorem~2]{newman:80:normal} hold, and as
  $N\to\infty$,
  \begin{align*}
    \frac{X_1+\ldots +X_N}{\sqrt{N}} \xrightarrow{\rm d} {\rm N}(0, \tau^2)
  \end{align*}
  with
  \[
    \tau^2=\sum_{j\in\Z}\Cov(X_0,X_j)
    =\sum_{\ell,m=1}^k a_\ell a_m\chi(t_\ell,t_m).
  \]
  This proves that
  \eqref{E:fdd} holds for all $a_1, \ldots, a_k\in \R_+$.

  Taking $s=t$ in \eqref{E:Fejer-covariance} shows that
  $\E|S_{N,t}|^2/N\to\chi(t,t)$. Hence
  \[
    \bigl\{N^{-1/2}(S_{N,t_1},\ldots,S_{N,t_k}):N\in\N\bigr\}
  \]
  is tight. Given any sequence $N_j\to\infty$, choose a subsequence $N_j'$ such
  that
  \[
    (N_j')^{-1/2}(S_{N_j',t_1},\ldots,S_{N_j',t_k})
    \xrightarrow{\rm d}(G_1,\ldots,G_k).
  \]
  The convergence already proved for nonnegative coefficients shows that, for
  every $a\in\R_+^k$, the random variable $a\cdot G$ is centered Gaussian with
  variance
  \[
    \sum_{i,j=1}^k a_i a_j\chi(t_i,t_j).
  \]
  Lemma~A.1 of \cite{mueller.pu:25:spatial} therefore implies that
  $(G_1,\ldots,G_k)$ is a centered Gaussian vector. Taking $a=e_i$ identifies
  $\Var(G_i)=\chi(t_i,t_i)$, while taking $a=e_i+e_j$ gives
  \[
    2\Cov(G_i,G_j)
    =\Var(G_i+G_j)-\Var(G_i)-\Var(G_j)
    =2\chi(t_i,t_j).
  \]
  Thus every subsequential limit has the law of
  $(\mathcal G(t_1),\ldots,\mathcal G(t_k))$. Therefore, as $N\to\infty$,
  \begin{align*}
    N^{-1/2}(S_{N,t_1},\ldots,S_{N,t_k})
    \xrightarrow{\rm d}(\mathcal G(t_1),\ldots,\mathcal G(t_k)).
  \end{align*}
  The proof of Theorem \ref{T:CLT} is complete.
\end{proof}

\section*{Acknowledgments} 
L.C. was supported in part by NSF grants DMS-2246850/2443823 and a Collaboration
Grant for Mathematicians (\#959981) from the Simons Foundation. F.P. was
supported in part by National Natural Science Foundation of China (No. 12571153)
and National Key R\&D Program of China (No. 2022YFA 1006500). During the preparation of this work, the authors used large language model–based tools as auxiliary aids in the research and writing process. All mathematical content was reviewed and verified by the authors, who take full responsibility for the manuscript.


\begin{thebibliography}{999}

\bibitem{aggarwal.corwin.ea:24:kpz}
Aggarwal, A., Corwin, I. and Hegde, M. (2024).
KPZ fixed point convergence of the ASEP and stochastic six-vertex models.
Preprint \href{https://arxiv.org/abs/2412.18117}{arXiv:2412.18117}.

\bibitem{basu.bhattacharjee:24:limit}
Basu, R. and Bhattacharjee, S. (2024).
Limit theorems for extrema of Airy processes.
Preprint \href{https://arxiv.org/abs/2406.11826}{arXiv:2406.11826}.

\bibitem{basu.busani.ea:23:on}
Basu, R., Busani, O. and Ferrari, P. L. (2023).
On the exponent governing the correlation decay of the Airy$_1$ process.
\textit{Comm. Math. Phys.} \textbf{398}, no. 3, 1171--1211.
\href{https://mathscinet.ams.org/mathscinet-getitem?mr=4561801}{MR4561801};
\href{https://doi.org/10.1007/s00220-022-04544-1}{doi:10.1007/s00220-022-04544-1}.

\bibitem{bhattacharjee.pu:25:macroscopic}
Bhattacharjee, S. and Pu, F. (2026).
Macroscopic Hausdorff dimension of the level sets of the Airy processes.
\textit{Electron. J. Probab.} \textbf{31}, Paper No. 36, 21 pp.
Preprint \href{https://arxiv.org/abs/2501.00772}{arXiv:2501.00772}.

\bibitem{chen.jimenez:26:spatial}
Chen, L. and Jim\'enez, J. J. (2026).
Spatial covariance of KPZ from flat initial profile.
Preprint \href{https://arxiv.org/abs/2603.14174}{arXiv:2603.14174}.

\bibitem{chen.khoshnevisan.ea:23:central}
Chen, L., Khoshnevisan, D., Nualart, D. and Pu, F. (2023).
Central limit theorems for spatial averages of the stochastic heat equation via Malliavin--Stein's method.
\textit{Stoch. Partial Differ. Equ. Anal. Comput.} \textbf{11}, no. 1, 122--176.
\href{https://mathscinet.ams.org/mathscinet-getitem?mr=4563698}{MR4563698};
\href{https://doi.org/10.1007/s40072-021-00224-8}{doi:10.1007/s40072-021-00224-8}.


\bibitem{dauvergne.ortmann.ea:22:directed}
Dauvergne, D., Ortmann, J. and Vir\'ag, B. (2022).
The directed landscape.
\textit{Acta Math.} \textbf{229}, no. 2, 201--285.
\href{https://mathscinet.ams.org/mathscinet-getitem?mr=4554223}{MR4554223};
\href{https://doi.org/10.4310/acta.2022.v229.n2.a1}{doi:10.4310/acta.2022.v229.n2.a1}.

\bibitem{dauvergne.sarkar.ea:22:three-halves}
Dauvergne, D., Sarkar, S. and Vir\'ag, B. (2022).
Three-halves variation of geodesics in the directed landscape.
\textit{Ann. Probab.} \textbf{50}, no. 5, 1947--1985.
\href{https://mathscinet.ams.org/mathscinet-getitem?mr=4474505}{MR4474505};
\href{https://doi.org/10.1214/22-aop1574}{doi:10.1214/22-aop1574};
\href{https://arxiv.org/abs/2010.12994}{arXiv:2010.12994}.

\bibitem{davydov:68:convergence}
Davydov, J. A. (1968).
The convergence of distributions which are generated by stationary random processes.
\textit{Teor. Verojatnost. i Primenen.} \textbf{13}, 730--737.
\href{https://mathscinet.ams.org/mathscinet-getitem?mr=243586}{MR243586}.

\bibitem{esary.proschan.ea:67:association}
Esary, J. D., Proschan, F. and Walkup, D. W. (1967).
Association of random variables, with applications.
\textit{Ann. Math. Statist.} \textbf{38}, 1466--1474.
\href{https://mathscinet.ams.org/mathscinet-getitem?mr=217826}{MR217826};
\href{https://doi.org/10.1214/aoms/1177698701}{doi:10.1214/aoms/1177698701}.

\bibitem{gu.pu:25:spatial}
Gu, Y. and Pu, F. (2025).
Spatial decorrelation of KPZ from narrow wedge.
Preprint \href{https://arxiv.org/abs/2506.23065}{arXiv:2506.23065}.

\bibitem{johansson.rahman:21:multitime}
Johansson, K. and Rahman, M. (2021).
Multitime distribution in discrete polynuclear growth.
\textit{Comm. Pure Appl. Math.} \textbf{74}, no. 12, 2561--2627.
\href{https://mathscinet.ams.org/mathscinet-getitem?mr=4373163}{MR4373163};
\href{https://doi.org/10.1002/cpa.21980}{doi:10.1002/cpa.21980};
\href{https://arxiv.org/abs/1906.01053}{arXiv:1906.01053}.

\bibitem{liu:22:multipoint}
Liu, Z. (2022).
Multipoint distribution of TASEP.
\textit{Ann. Probab.} \textbf{50}, no. 4, 1255--1321.
\href{https://doi.org/10.1214/21-AOP1557}{doi:10.1214/21-AOP1557};
\href{https://arxiv.org/abs/1907.09876}{arXiv:1907.09876}.

\bibitem{matetski.quastel.ea:21:kpz}
Matetski, K., Quastel, J. and Remenik, D. (2021).
The KPZ fixed point.
\textit{Acta Math.} \textbf{227}, no. 1, 115--203.
\href{https://mathscinet.ams.org/mathscinet-getitem?mr=4346267}{MR4346267};
\href{https://doi.org/10.4310/acta.2021.v227.n1.a3}{doi:10.4310/acta.2021.v227.n1.a3}.

\bibitem{mueller.pu:25:spatial}
Mueller, C. and Pu, F. (2025).
Spatial fluctuation for stochastic heat equation with H\"older coefficients.
Preprint \href{https://arxiv.org/abs/2510.00807}{arXiv:2510.00807}.

\bibitem{newman:80:normal}
Newman, C. M. (1980).
Normal fluctuations and the FKG inequalities.
\textit{Comm. Math. Phys.} \textbf{74}, no. 2, 119--128.
\href{https://mathscinet.ams.org/mathscinet-getitem?mr=576267}{MR576267};
\href{https://doi.org/10.1007/BF01197754}{doi:10.1007/BF01197754}.

\bibitem{nica.quastel.ea:20:one-sided*1}
Nica, M., Quastel, J. and Remenik, D. (2020).
One-sided reflected Brownian motions and the KPZ fixed point.
\textit{Forum Math. Sigma} \textbf{8}, Paper No. e63, 16 pp.
\href{https://mathscinet.ams.org/mathscinet-getitem?mr=4190063}{MR4190063};
\href{https://doi.org/10.1017/fms.2020.56}{doi:10.1017/fms.2020.56}.

\bibitem{pu:23:ergodicity}
Pu, F. (2025).
Ergodicity, CLT and asymptotic maximum of the Airy$_1$ process.
\textit{Bernoulli} \textbf{31}, 2624--2648.
Preprint \href{https://arxiv.org/abs/2311.11217}{arXiv:2311.11217}.

\bibitem{virag:20:heat}
Vir\'ag, B. (2020).
The heat and the landscape I.
Preprint \href{https://arxiv.org/abs/2008.07241}{arXiv:2008.07241}.

\bibitem{williams:91:probability}
Williams, D. (1991).
\textit{Probability with Martingales}.
Cambridge Mathematical Textbooks. Cambridge University Press, Cambridge, xvi+251 pp.
\href{https://mathscinet.ams.org/mathscinet-getitem?mr=1155402}{MR1155402};
\href{https://doi.org/10.1017/CBO9780511813658}{doi:10.1017/CBO9780511813658}.

\bibitem{widom:04:on}
Widom, H. (2004).
On asymptotics for the Airy process.
\textit{J. Statist. Phys.} \textbf{115}, no. 3--4, 1129--1134.
\href{https://mathscinet.ams.org/mathscinet-getitem?mr=2040024}{MR2040024};
\href{https://doi.org/10.1023/B:JOSS.0000022384.58696.61}{doi:10.1023/B:JOSS.0000022384.58696.61};
\href{https://arxiv.org/abs/math/0308157}{arXiv:math/0308157}.

\bibitem{wu:23:kpz}
Wu, X. (2023).
The KPZ equation and the directed landscape.
Preprint \href{https://arxiv.org/abs/2301.00547}{arXiv:2301.00547}.

\end{thebibliography}
\end{document}